\RequirePackage[english]{babel}
\documentclass[a4paper,twoside,11pt]{amsart}
\usepackage[latin1]{inputenc}
\usepackage{amsmath}
\usepackage{amssymb}
\usepackage{amsfonts}
\usepackage{tikz}
\usepackage{fixltx2e}[2005/12/01]

\usepackage{enumerate}

\usepackage[matrix,arrow,cmtip,frame,graph,curve,2cell]{xy} 
\SelectTips{cm}{}
\newdir{(}{{}*!/-5pt/@^{(}}
\newdir{(x}{{}*!/-5pt/@_{(}}
\newdir{+}{{}*!/-9pt/{}}
\newdir{>+}{@{>}*!/-9pt/{}}
\entrymodifiers={+!!<0pt,\fontdimen22\textfont2>}
\UseAllTwocells

\usepackage[nosubsections]{mytheorems2}

\newcommand\pref[1]{\eqref{#1}}

\newcommand\inj{\hookrightarrow}
\newcommand\iso{\cong} 
\newcommand\map[3]{#1\colon #2\rightarrow #3}
\newcommand\injmap[3]{#1\colon #2\hookrightarrow #3}
\newcommand\id[1]{\mathrm{id}_{#1}} 
\DeclareMathOperator{\Hom}{Hom}
\newcommand{\catHom}{\mathbf{Hom}} 

\newcommand\sO{\mathcal{O}}

\newcommand\Spec{\mathrm{Spec}}
\newcommand\A[1]{\mathbb{A}^{#1}}    
\renewcommand\P[1]{\mathbb{P}^{#1}}    

\newcommand\Sch{\mathbf{Sch}}
\newcommand\Set{\mathbf{Set}}
\newcommand\Setp{\mathbf{Set}_{*}}

\newcommand{\etale}{\'{e}tale}

\newcommand\red{\mathrm{red}}
\newcommand{\et}{\text{\'et}} 

\begin{document}

\title[Canonical embedding of an unramified morphism]{The canonical embedding of an unramified morphism in an \'etale morphism}
\author{David Rydh}
\address{Department of Mathematics, University of California, Berkeley,
970 Evans Hall \#3840, Berkeley, CA 94720-3840 USA}
\thanks{Supported by the Swedish Research Council.}
\email{dary@math.berkeley.edu}
\date{2010-05-13}
\subjclass[2000]{Primary 14A20}
\keywords{unramified, \'etale, \'etale envelope, stack}


\begin{abstract}
We show that every unramified morphism $X\to Y$ has a canonical and universal
factorization $X\inj E_{X/Y}\to Y$ where the first morphism is a closed
embedding and the second is \etale{} (but not separated).
\end{abstract}

\maketitle

\begin{section}{Introduction}
It is well-known that any unramified morphism $\map{f}{X}{Y}$ of schemes (or
Deligne--Mumford stacks) is an \etale{}-local embedding, i.e., there exists a
commutative diagram
{\def\theequation{*}%
\begin{equation}\label{E:local-embedding}
\vcenter{\xymatrix{X'\ar[r]^{f'}\ar[d] & Y'\ar[d]\\
X\ar[r]^f & Y\ar@{}[ul]|\circ}}
\end{equation}}%
where $f'$ is a closed embedding and the vertical morphisms are \etale{} and
surjective. To see this, take \etale{} presentations $Y'\to Y$ and $X'\to
X\times_Y Y'$ such that $X'$ and $Y'$ are schemes and then
apply~\cite[Cor.~18.4.7]{egaIV}. This proof utterly fails if $Y$ is a stack
which is not Deligne--Mumford and the existence of a
diagram~\eqref{E:local-embedding} appears to be unknown in this case.
Also, if we require $Y'\to Y$ to be separated, then in general there is no
\emph{canonical} choice of the diagram~\eqref{E:local-embedding}.

The purpose of this article is to show that for an arbitrary unramified
morphism of algebraic stacks, there is
a \emph{canonical} \etale{} morphism $E_{X/Y}\to Y$ and a closed embedding
$X\inj E_{X/Y}$ over $Y$. If $\map{f}{X}{Y}$ is an unramified morphism of
\emph{schemes} (or algebraic spaces), then $E_{X/Y}$ is an algebraic space.

\begin{remark}\label{R:immersion}
If $\map{f}{X}{Y}$ is an \emph{immersion}, then there is a canonical
factorization $X\inj U\to Y$ where $X\inj U$ is a closed immersion and $U\to Y$
is an open immersion. Here $U$ is the largest open neighborhood of $X$ such
that $X$ is closed in $U$. Explicitly, $U=Y\setminus (\overline{X}\setminus
X)$. This factorization commutes with flat base change
if $f$ is quasi-compact but not with arbitrary base change
unless $f$ is a closed immersion.
%
The canonical factorization that
we will construct is slightly different and commutes with arbitrary base change
but is not separated. For an immersion $\map{f}{X}{Y}$, the scheme $E_{X/Y}$ is
the gluing of $U$ and $Y$ along the open subsets $U\setminus X=Y\setminus
\overline{X}$.
\end{remark}

\begin{theorem}\label{T:main-theorem}
Let $\map{f}{X}{Y}$ be an unramified morphism of algebraic stacks. Then there
exists an \etale{} morphism $\map{e=e_f}{E_{X/Y}}{Y}$ together with a closed
immersion $\injmap{i=i_f}{X}{E_{X/Y}}$ and an open immersion
$\map{j=j_f}{Y}{E_{X/Y}}$ such that $f=e\circ i$, $\id{Y}=e\circ j$ and the
complement of $i(X)$ is $j(Y)$. We have that:
\begin{enumerate}
\item The triple $(e,i,j)$ is unique up to unique $2$-isomorphism, i.e.,
if $\map{e'}{E'}{Y}$ is an \etale{} morphism, $\injmap{i'}{X}{E'}$ is a closed
immersion and $\map{j'}{Y}{E'}$ is an open immersion over $Y$ such that the
complement of $i'(X)$ is $j'(Y)$, then there is an isomorphism
$\map{\varphi}{E'}{E_{X/Y}}$ such that $e'=e\circ\varphi$, $i=\varphi\circ i'$
and $j=\varphi\circ j'$, and $\varphi$ is unique up to unique $2$-isomorphism.
\label{TI:uniqueness}
\item Let $\map{g}{Y'}{Y}$ be any morphism and let $\map{f'}{X'}{Y'}$ be the
pull-back of $f$ along $g$. Then the pull-backs of $e_f$, $i_f$ and $j_f$ along
$g$ coincide with $e_{f'}$, $i_{f'}$ and $j_{f'}$.
%
\label{TI:base-change}
\item $e$ is an isomorphism if and only if $X=\emptyset$.
\label{TI:iso}
\item $e$ is separated if and only if $f$ is \etale{} and separated.
\label{TI:sep}
\item $e$ is universally closed (resp.\ quasi-compact, resp.\ representable) if
and only if $f$ is so. In particular, $e$ is universally closed, quasi-compact
and representable if $f$ is finite.
\label{TI:uc,qc,repr}
\item $e$ is of finite presentation (resp.\ quasi-separated) if and only if $f$
is of constructible finite type (resp.\ quasi-separated and locally of
constructible finite type).
For the definition of the latter notions, see Appendix~\ref{A:constructible}.
\label{TI:qs,finite-pres}
\item $e$ is a local isomorphism if and only if $f$ is a local immersion.
\label{TI:loc-immersion}
\item If $\map{g}{V}{X}$ is an \etale{} morphism, then there exists a unique
\etale{} morphism $\map{g_*}{E_{V/Y}}{E_{X/Y}}$ such that the pull-back of
$i_{f}$ (resp.\ $j_{f}$) along $g_*$ is $i_{f\circ g}$ (resp.\ $j_{f\circ
g}$). If $g$ is surjective (resp.\ representable, resp.\ an open immersion),
then so is $g_*$.
\label{TI:etale-pushfwd}
\item If $\map{g}{V}{X}$ is a closed immersion then there is a natural
surjective morphism $\map{g^*}{E_{X/Y}}{E_{V/Y}}$ such that $i_{f\circ
g}=g^*\circ i_f\circ g$ and $j_{f\circ g}=g^*\circ j_f$. The morphism $g^*$ is
an isomorphism if and only if $g$ is a nil-immersion (i.e., a bijective closed
immersion).
If $g$ is an open and closed immersion, then $g^*g_*=\id{E_{V/Y}}$.
%
\label{TI:closed-pullback}
\end{enumerate}
\end{theorem}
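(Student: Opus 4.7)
My plan is to construct $E_{X/Y}$ in stages, defining it first for immersions and then globalizing to arbitrary unramified morphisms via a functorial description. For a closed immersion $\injmap{f}{X}{Y}$ I set $E_{X/Y}=Y\sqcup_{Y\setminus X}Y$, the scheme obtained by gluing two copies of $Y$ along the open complement of $X$; one copy provides $j$, the image of $X$ in the other copy provides $i$, and the \etale{} morphism $e$ collapses the two copies to $Y$. For a general immersion, composing with the open immersion of Remark~\ref{R:immersion} yields $E_{X/Y}$ as the scheme described there, whose properties are then entirely explicit.

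For an arbitrary unramified morphism, I will characterize $E_{X/Y}$ through its functor of points: a $T$-point is a triple $(g,Z,h)$ where $\map{g}{T}{Y}$ is a morphism, $\injmap{}{Z}{T}$ is a closed substack with open complement, and $\map{h}{Z}{X}$ is a lift of $g|_Z$ along $f$. This packages the data $(e,i,j)$ together with the complementarity condition automatically, with $j$ corresponding to $Z=\emptyset$ and $i$ to $(f,\id{X},\id{X})$. Uniqueness~\ref{TI:uniqueness} and base change~\ref{TI:base-change} are then immediate from this universal description, and~\ref{TI:iso} and the functorial statements~\ref{TI:etale-pushfwd} and~\ref{TI:closed-pullback} follow by explicit manipulation of the triples.

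Representability of this functor is the heart of the matter. The plan is to exhibit a smooth atlas by taking a smooth presentation $Y_{0}\to Y$ by a scheme, then a smooth presentation $X_{0}\to X\times_{Y}Y_{0}$, so that $X_{0}\to Y_{0}$ is an unramified morphism of schemes. Applying~\cite[Cor.~18.4.7]{egaIV}, after a further \etale{} cover $Y_{0}'\to Y_{0}$ the induced map $X_{0}'\to Y_{0}'$ factors through a closed immersion, and the immersion construction produces a scheme $E_{X_{0}'/Y_{0}'}$ \etale{} over $Y_{0}'$. Descent along the resulting smooth groupoid $Y_{0}'\times_{Y}Y_{0}'\rightrightarrows Y_{0}'$ then yields $E_{X/Y}$ as an algebraic stack. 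Since $f$ is unramified, $\Delta_{f}\colon X\to X\times_{Y}X$ is a clopen immersion, which is what makes $e$ formally \etale{} and ensures the deformation theory works out.

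The main obstacle is executing this descent without the \etale-local embedding on $Y$ itself, whose nonexistence in the stack case is the whole point of the introduction. The functorial description handles this cleanly: effectivity of descent is automatic from the functor, and one only needs to verify that the scheme-theoretic local model is compatible with the smooth descent data, which reduces to the base-change compatibility already established in the immersion case. The remaining properties~\ref{TI:sep}--\ref{TI:loc-immersion} can then be checked smooth-locally on $Y$, reducing each time to the explicit local model of Step~1, where closedness, separatedness, quasi-compactness, finite presentation, and the local-isomorphism criterion can all be read off directly from the gluing $Y\sqcup_{Y\setminus X}Y$ and its variants.
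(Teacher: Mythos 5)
Your overall architecture (a functor of points, local models coming from \'etale-local embeddings $X'\inj Y'$, and an atlas obtained by descent) is the same family of argument the paper uses, but the execution has a genuine gap at its center: the functor you propose is not the right one. A $T$-point cannot be merely ``a closed substack $Z\inj T$ together with a lift $\map{h}{Z}{X}$ of $g|_Z$'' --- the phrase ``with open complement'' adds nothing, since every closed substack has open complement. For a closed immersion $X\inj Y$ and $T=Y$ your groupoid contains \emph{every} closed subscheme of $X$, and this functor is a Hilbert-scheme-like object, not an \'etale one: a closed subscheme of $T_0$ extends highly non-uniquely over a square-zero thickening $T_0\inj T$, so the functor is not even formally unramified over $Y$. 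The condition that must be imposed is that the graph $Z\to X\times_Y T$ of the lift is an \emph{open} immersion; equivalently, a $T$-point is an open subspace $W\subseteq X\times_Y T$ that is closed over $T$. That openness is precisely what forces unique infinitesimal extension, makes the stalk at a geometric point $\overline{y}$ equal to $|X_{\overline{y}}|\cup\{\emptyset\}$, and hence makes $e$ \'etale. Your remark that $\Delta_f$ being a clopen immersion ``makes the deformation theory work out'' is pointing at the right phenomenon, but it does not rescue the functor as stated.

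There are two further problems in the atlas step. First, a \emph{smooth} presentation $X_0\to X\times_Y Y_0$ does not yield an unramified $X_0\to Y_0$; you need an \'etale presentation, which exists because $X\times_Y Y_0$ is Deligne--Mumford (being unramified over an algebraic space). Second, and more seriously, after applying \cite[Cor.~18.4.7]{egaIV} the map $X_0'\to X\times_Y Y_0'$ is an \'etale surjection, not an isomorphism, so $E_{X_0'/Y_0'}$ is a \emph{cover} of the envelope of $X\times_Y Y_0'\to Y_0'$, not equal to it. ``Base-change compatibility'' only handles changing $Y$; covering $X$ changes the envelope, and you need the non-formal fact that an \'etale surjection $\map{g}{V}{X}$ induces an \'etale, representable, surjective $\map{g_*}{E_{V/Y}}{E_{X/Y}}$. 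The paper proves this by identifying the fibers of $g_*$ over a scheme $T\to E_{X/Y}$ with $q_*(V\times_X W)$ for the closed immersion $\injmap{q}{W}{T}$ and invoking the proper base change theorem; nothing in your plan supplies this. Relatedly, several of the properties \ref{TI:sep}--\ref{TI:loc-immersion} cannot simply be ``read off'' from the local gluing $Y\sqcup_{Y\setminus X}Y$, because the local model only covers $E_{X/Y}$: Example~\pref{E:alg-space} gives an envelope that is not a scheme even though every local model is one.
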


We call the \etale{} morphism $\map{e}{E_{X/Y}}{Y}$ the \emph{\etale{}
envelope} of $X\to Y$. Note that the fibers of $e$ coincide with the fibers of
$X\amalg Y\to Y$. In Definition~\pref{D:etale-envelope:repr}
(resp.~\pref{D:etale-envelope:general}) we give a functorial description of
$E_{X/Y}$ in the representable (resp.\ general) case.

For the definitions of representable and unramified morphisms of stacks, see
Appendices~\ref{A:stacks} and~\ref{A:unramified-and-etale}. If the reader does
not care about stacks, then rest assured that any scheme (or algebraic space)
is an algebraic stack and that any morphism of schemes (or algebraic spaces) is
representable. For schemes (or algebraic spaces), unique up to unique
$2$-isomorphism means unique up to unique isomorphism.

\begin{remark}
Even if $\map{f}{X}{Y}$ is a morphism of schemes (as is the case if $Y$ is a
scheme and $f$ is representable and separated), it is often the case that
$E_{X/Y}$ is not a scheme but an algebraic space, cf.\
Example~\pref{E:alg-space}. However, if $f$ is a local immersion, then
$E_{X/Y}$ is a scheme by~\ref{TI:loc-immersion}.
\end{remark}

\begin{remark}
For any representable morphism $\map{f}{X}{Y}$ locally of finite type
one can define a natural operation
$\map{f_{\#}}{\Setp(X)}{\Setp(Y)}$ on \etale{} sheaves of
\emph{pointed sets} such that if $f$ is unramified, then the \etale{}
envelope $E_{X/Y}$ is the sheaf $f_{\#}\{0,1\}_X$. Here $\{0,1\}_X$
denotes the constant sheaf of a pointed set with two elements. If $f$
is \etale{}, then $f_{\#}$ is left adjoint to the pull-back $f^{-1}$
of pointed sets and if $f$ is a monomorphism, then $f_{\#}=f_{!}$ is
extension by zero. We do not develop the general theory of $f_{\#}$ in
this article.
\end{remark}

\begin{remark}
Note that ``quasi-compact'' is equivalent to ``finite type'' for unramified
morphisms. When $Y$ is non-noetherian, the question of finite presentation (or
equivalently of quasi-separatedness) of $E_{X/Y}\to Y$ is somewhat delicate,
cf.\ Appendix~\ref{A:constructible}.
\end{remark}



We begin with a few examples of the \etale{} envelope in
Section~\ref{S:examples}. The proof of Theorem~\pref{T:main-theorem} in the
representable case is given in Section~\ref{S:representable-case} and the
general case is dealt with in Section~\ref{S:general-case}. Some applications
of the main theorem are outlined in Section~\ref{S:applications}.
In Appendix~\ref{A:stacks} we give precise meanings to ``algebraic
space'', ``algebraic stack'' and ``representable''. In
Appendix~\ref{A:unramified-and-etale} we define unramified and \etale{}
morphisms of stacks and establish their basic properties.
Some limit results used in the non-noetherian case are given in
Appendix~\ref{A:limits}.
Finally, in Appendix~\ref{A:constructible} we define the technical condition
``of constructible finite type'' which is only used to give a characterization
of the unramified morphisms having a finitely presented \etale{} envelope in the
non-noetherian case.

Theorem~\pref{T:main-theorem} was inspired by a similar result recently
obtained by Anca and Andrei
Musta\c{t}\v{a}~\cite{mustata-mustata_local-embeddings}. They study the case
when $\map{f}{X}{Y}$ is a finite unramified morphism between proper integral
noetherian Deligne--Mumford stacks and construct a stack $F_{X/Y}$ such that
$F_{X/Y}\to Y$ is \etale{} and universally closed and such that
$F_{X/Y}\times_Y f(X)$ is a union of closed substacks $\{F_i\}$ which admit
\etale{} and universally closed morphisms $F_i\to X$. The stack $F_{X/Y}$ has
an explicit groupoid description but a functorial interpretation is missing. In
general, $F_{X/Y}$ is different from $E_{X/Y}$ and does not commute with
arbitrary base change.
\end{section}


\begin{section}{Examples}\label{S:examples}
\begin{example}
If $\map{f}{X}{Y}$ is \etale{}. Then $E_{X/Y}=X\amalg Y$.
\end{example}

\begin{example}
Let $Y$ be a scheme and let $X=\coprod_{i=1}^n X_i$ be the disjoint union of
closed subschemes $X_i\inj Y$. Then $E_{X/Y}$ is a scheme and can be described
as the gluing of $n+1$ copies of $Y$ as follows. Let $Y_i=Y$ for
$i=1,\dots,n$. Glue each $Y_i$ to $j(Y)=Y$ along $Y\setminus X_i$. The
resulting scheme is $E_{X/Y}$. Note that $Y_i\cap Y_j=Y\setminus (X_i\cup
X_j)$.
\end{example}

\begin{example}\label{E:node}
The following example is a special case of the previous example. Let
$Y=\Spec(k[x,y]/xy)$ be the union of the two coordinate axes in the affine
plane and let $X=\A{1}\amalg \A{1}$ be the normalization of $Y$. Then $E_{X/Y}$
can be covered by three affine open subsets isomorphic to $Y$. If we denote
these three subsets by $j(Y),Y_1,Y_2$, then $j(Y)\cap Y_1$ is the open subset
$y\neq 0$, $j(Y)\cap Y_2$ is the open subset $x\neq 0$ and $Y_1\cap
Y_2=\emptyset$.
\end{example}

\begin{example}\label{E:nodal-cubic}
Let $Y$ be a nodal cubic curve in $\P{2}$ and let $\map{f}{X}{Y}$ be the
normalization. Let $0\in Y$ be the node and let $\{+1,-1\}\subseteq X$ be its
preimage. The scheme $E_{X/Y}$ has two irreducible components $X$ and
$\overline{j(Y)}$ and $\overline{j(Y)}$ is isomorphic to the gluing of $Y$ with
$X$ along $Y\setminus \{0\}$ and $X\setminus \{+1,-1\}$. The scheme $E_{X/Y}$
is covered by two open separated subschemes $j(Y)$ and $U$. The open subset
$U=X_1\cup X_2$ is the union of two copies of $X$, the first is $i(X)$ and the
second is $\overline{j(Y)}\setminus \{0\}$, such that $\pm 1\in X_1$ is
identified with $\mp 1\in X_2$. The intersection of $j(Y)$ and $U$ is
$j(Y)\setminus 0=X_2\setminus \{+1,-1\}$.
\end{example}

\begin{example}\label{E:alg-space}
Let $Y$ be an irreducible scheme, let $Z\inj Y$ be an irreducible closed
subscheme, $Z\neq Y$, and let $\map{g}{X}{Z}$ be a non-trivial \etale{} double
cover. Then
$E_{X/Y}$ is an algebraic space which is not a scheme. In fact, let
$E=E_{X/Y}\setminus j(Z)$. Then $E\subseteq E_{X/Y}$ is open and
$\map{e|_E}{E}{Y}$ is universally closed and such that $e|_E$ is an
isomorphism outside $Z$ and coincides with $g$ over $Z$. If $\xi$ is the
generic point of $Z$, then $E_\xi=\{\eta\}$ where $\eta$ is the generic point
of $X$. If $E$ was a scheme, then $E\times_Y
\Spec(\sO_{Y,\xi})$ would be a local scheme with closed point $\eta$ and in
particular separated. This would imply that $E\times_Y \Spec(\sO_{Y,\xi})\to
\Spec(\sO_{Y,\xi})$ is finite and \etale{}.
But $E\to Y$ has generic rank $1$ and special rank $2$.
\end{example}

\tikzset{graph norm/.style={line width=1 pt}}
\tikzset{graph border/.style={line width=0.4 pt}}
\tikzset{graph over/.style={double distance=1 pt, line width=0.8 pt, draw=white,double=black}}

\noindent
\begin{tikzpicture}[scale=1.9]
 \draw[graph norm] plot[variable=\t,smooth,domain=-1:1] ({\t},{0.5*\t-0.1*exp(-5.0 * \t^2)^8});
 \draw[graph norm,red] plot[variable=\t,domain=-1:1] ({\t},{-0.5*\t-0.1});
 \draw[graph over] plot[variable=\t,domain=-0.15:0] ({\t},{0.5*\t});
 \draw[graph norm] plot[variable=\t,domain=-1:1] ({\t},{0.5*\t});
 \draw[graph over] plot[variable=\t,smooth,domain=-1:1] ({\t},{-0.5*\t+0.1*exp(-5.0 * \t^2)^8});
 \draw[graph over] plot[variable=\t,domain=0.05:0.15] ({\t},{-0.5*\t});
 \draw[graph norm] plot[variable=\t,domain=-1:1] ({\t},{-0.5*\t});
 \draw[graph over,double=red] plot[variable=\t,domain=-1:-0.05] ({\t},{0.5*\t+0.1});
 \draw[graph norm,red] plot[variable=\t,domain=-1:1] ({\t},{0.5*\t+0.1});
 \node [red,above,left] at (0.45,0.5) {$i(X)$};
 \node [right] at (0.3,0) {$\overline{j(Y)}$};
 \node [below, text centered] at (0,-1) {Example~\pref{E:node}};


\begin{scope}[xshift=2.2cm]
 \draw[graph norm] plot[variable=\t,smooth,domain=0:1.33] ({\t^2-1},{0.5*(\t^3-\t)-0.1*exp(-5.0 * (\t^2-1)^2)^10});
 \draw[graph norm,red] plot[variable=\t,smooth,domain=-1.3:0.1] ({\t^2-0.9},{0.5*(\t^3-0.9*\t)-0.1*(-\t)});
 \draw[graph over] plot[variable=\t,smooth,domain=0.92:1] ({\t^2-1},{0.5*(\t^3-\t)});
 \draw[graph norm] plot[variable=\t,smooth,domain=-0.1:1.33] ({\t^2-1},{0.5*(\t^3-\t)});
 \draw[graph over] plot[variable=\t,smooth,domain=-1.33:0] ({\t^2-1},{0.5*(\t^3-\t)+0.1*exp(-5.0 * (\t^2-1)^2)^10});
 \draw[graph over] plot[variable=\t,smooth,domain=-1.02:-1.05] ({\t^2-1},{0.5*(\t^3-\t)});
 \draw[graph norm] plot[variable=\t,smooth,domain=-1.33:0.1] ({\t^2-1},{0.5*(\t^3-\t)});
 \draw[graph over] plot[variable=\t,smooth,domain=0:0.92] ({\t^2-0.9},{0.5*(\t^3-0.9*\t)+0.1*(\t)});
 \draw[graph norm,red] plot[variable=\t,smooth,domain=-0.1:1.3] ({\t^2-0.9},{0.5*(\t^3-0.9*\t)+0.1*(\t)});
 \node [red,above,left] at (0.45,0.5) {$i(X)$};
 \node [right] at (0.3,0) {$\overline{j(Y)}$};
 \node [below, text centered] at (0,-1) {Example~\pref{E:nodal-cubic}};
\end{scope}

\begin{scope}[xshift=4.4cm]
  \draw[graph border] plot[variable=\t,samples=4,domain=20:380] ({1.2*cos(\t)},{sin (\t)*0.8});
  \draw[graph border] plot[variable=\t,smooth,domain=0:360] ({0.5*cos(\t)},{0.5*sin(\t)*0.6});
  \draw[graph over,double=red] plot[variable=\t,smooth,domain=0:360] ({0.5*cos(\t-110)},{0.5*sin(\t-110)*0.6+0.2-0.1*cos(\t/2)});
  \draw[graph over,double=red] plot[variable=\t,smooth,domain=360:720] ({0.5*cos(\t-110)},{0.5*sin(\t-110)*0.6+0.2-0.1*cos(\t/2)});


 \node [red,right] at (0.5,0.15) {$i(X)$};
 \node [below] at (0.3,-0.35) {$j(Y)$};
 \node [left, thin] at (-0.45,-0.1) {$j(Z)$};
 \node [below, text centered] at (0,-1) {Example~\pref{E:alg-space}};
\end{scope}
\end{tikzpicture}

\end{section}


\begin{section}{The representable case}\label{S:representable-case}
In this section we prove Theorem~\pref{T:main-theorem} for representable
unramified morphisms.

\begin{definition}\label{D:etale-envelope:repr}
Let $\map{f}{X}{Y}$ be an unramified morphism of algebraic spaces. We define a
contravariant functor $\map{E_{X/Y}}{\Sch_{/Y}}{\Set}$ as follows. For any
scheme $T$ and morphism $T\to Y$, we let $E_{X/Y}(T)$ be the set of commutative
diagrams
$$\xymatrix{& X\times_Y T\ar[d]^{\pi_2}\\
W\ar@{(->}[r]\ar[ur] & T}$$
such that $W\to X\times_Y T$ is an open immersion and $W\to T$ is a closed
immersion. Pull-backs are defined by pulling back such diagrams.
\end{definition}

The presheaf $E_{X/Y}$ is a presheaf of \emph{pointed sets}. The distinguished
element of $E_{X/Y}(T)$ is given by $W=\emptyset$.
It is also naturally a presheaf in partially ordered sets and if $f$ is
separated, then any two elements $W_1,W_2\in E_{X/Y}(T)$ have a greatest lower
bound given by $W_1\cap W_2$.

By fpqc-descent of open subsets and of closed immersions, we have that
$E_{X/Y}$ is a sheaf in the fpqc topology. Let $E_{X/Y,\et}$ denote the
restriction of $E_{X/Y}$ to the small \etale{} site on $Y$ so that
$E_{X/Y,\et}$ is an \etale{} sheaf. The first goal is to show that $E_{X/Y}$ is
\emph{locally constructible}, i.e., that $E_{X/Y}$ is the extension of
$E_{X/Y,\et}$ to the big \etale{} site.

\begin{lemma}\label{L:etale-envelope-is-loc-of-fp}
The functor $E_{X/Y}$ is \emph{locally of finite presentation}, i.e., for every
inverse limit of affine schemes $T=\varprojlim T_\lambda$ over $Y$ we have that
$$\varinjlim_{\lambda} E_{X/Y}(T_\lambda)\to E_{X/Y}(T)$$
is bijective.
\end{lemma}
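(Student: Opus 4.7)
The statement is a standard limit property, so the plan is to invoke the limit theorems for schemes from Appendix~\ref{A:limits}. Write $T=\varprojlim_\lambda T_\lambda$ with each $T_\lambda$ affine. The crucial preliminary observation is that $W$ is affine: it is a closed subscheme of the affine $T$, hence quasi-compact, and so the open immersion $W\hookrightarrow X\times_Y T$ is also quasi-compact. All descent arguments will then be applied after passing to such quasi-compact data.

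For injectivity, suppose two diagrams at a common stage $\lambda$, with underlying qc open subschemes $W_\lambda,W'_\lambda\subseteq X\times_Y T_\lambda$, agree after pullback to $T$. Since $X\times_Y T=\varprojlim_\mu(X\times_Y T_\mu)$ and equality of qc open subschemes descends through such inverse limits of affines over $X\times_Y T_\lambda$, the equality $W_\lambda|_{T_\mu}=W'_\lambda|_{T_\mu}$ is already realized at some $\mu\geq\lambda$. For surjectivity, given $W\in E_{X/Y}(T)$, one first descends the qc open immersion $W\hookrightarrow X\times_Y T$ to a qc open immersion $W_\lambda\hookrightarrow X\times_Y T_\lambda$ for some $\lambda$, using that $X\to Y$ is locally of finite type. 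The composition $W_\lambda\to T_\lambda$ is then qc and unramified, hence of finite type and quasi-finite, and what remains is to promote it to a closed immersion after passing to some $\mu\geq\lambda$.

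The main obstacle is this last step: descending the property ``closed immersion'' for the finite-type (but possibly not finitely presented) morphism $W_\lambda\to T_\lambda$. The natural decomposition is ``closed immersion $=$ universally closed monomorphism of finite type''; descent of universal closedness for finite-type morphisms is a standard limit statement, and descent of the monomorphism condition, combined with the quasi-finiteness already established, yields a proper quasi-finite monomorphism at some finite stage, which is automatically a closed immersion. Packaging these statements cleanly in the possibly non-noetherian setting is precisely what Appendix~\ref{A:limits} is organized to handle; with those results in hand, the argument above gives both the injectivity and surjectivity of the natural map $\varinjlim_\lambda E_{X/Y}(T_\lambda)\to E_{X/Y}(T)$.
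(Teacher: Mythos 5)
Your overall strategy matches the paper's: observe that $W$ is affine, descend the data to a finite stage, and reduce the key step to Appendix~\ref{A:limits}. But there is a genuine gap in your first descent step. You propose to descend the quasi-compact open immersion $W\inj X\times_Y T$ directly to a quasi-compact open immersion $W_\lambda\inj X\times_Y T_\lambda$, ``using that $X\to Y$ is locally of finite type.'' That is not the relevant hypothesis (what drives any descent here is that an open immersion is locally of finite presentation and that $W$ is quasi-compact and quasi-separated), and more importantly the step itself is unjustified: $X\times_Y T_\lambda$ is an algebraic space which, in the generality of this paper, may be neither quasi-compact nor quasi-separated, so the standard fact that a quasi-compact open of $\varprojlim_\mu (X\times_Y T_\mu)$ is pulled back from a finite stage is not available. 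Proposition~\pref{P:semi-std-limits} only produces an \etale{} morphism $\map{w_\lambda}{W_\lambda}{X\times_Y T_\lambda}$ with $W_\lambda$ quasi-compact and quasi-separated, \emph{not} an open immersion. The paper's proof deliberately works with this weaker output: it first arranges, via Proposition~\pref{P:limit-of-ft+qs}, that $W_\lambda\to T_\lambda$ is a closed immersion, and only then deduces that $w_\lambda$ is a monomorphism, hence an \etale{} monomorphism, hence an open immersion. Your argument needs this reordering to work in the stated generality; the same remark applies to your injectivity step, which should be phrased as spreading out an isomorphism of the quasi-compact and quasi-separated sources (EGA~IV, 8.8.2) rather than as equality of open subschemes of a possibly non-quasi-separated ambient space.

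The second half of your surjectivity argument identifies the right difficulty but leans on a claim that is not standard. Decomposing ``closed immersion'' as ``universally closed monomorphism of finite type'' is fine, but descent of universal closedness through a limit is a standard statement only for \emph{finitely presented} morphisms; here $W_\lambda\to T_\lambda$ is merely of finite type, and this is exactly the non-noetherian subtlety the lemma must confront. Proposition~\pref{P:limit-of-ft+qs} supplies precisely the needed statement (descent of ``closed immersion'' for finite-type quasi-separated morphisms), but its proof does not proceed by descending properness: it first descends the monomorphism condition using that the \emph{diagonal} is finitely presented, and then invokes Zariski's main theorem to factor $W_\lambda\to T_\lambda$ through a finite morphism, after which the remaining open-and-closed condition is finitely presented and does descend. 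If you replace your appeal to ``standard limit statements'' by a citation of Proposition~\pref{P:limit-of-ft+qs}, this part of the argument closes; as written, the properness-descent claim is itself the content that would have to be proved.
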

\begin{proof}
An element of $E_{X/Y}(T)$ is an open immersion $\map{w}{W}{X\times_Y T}$ such
that $\map{\pi_2\circ w}{W}{T}$ is a closed immersion. As $w$ is locally of
finite presentation and $W$ is affine, there is by
Proposition~\pref{P:semi-std-limits} an \etale{} morphism
$\map{w_\lambda}{W_\lambda}{X\times_Y T_\lambda}$ such that $W_\lambda$ is
quasi-compact and quasi-separated and the pull-back of $w_\lambda$ along $T\to
T_\lambda$ is $w$. After increasing $\lambda$ we can also assume that the
morphism $\map{\pi_2\circ w_\lambda}{W_\lambda}{T_\lambda}$ is a closed
immersion by Proposition~\pref{P:limit-of-ft+qs}. Then $w_\lambda$ is an
\etale{} monomorphism and hence an open immersion. The open immersion
$w_\lambda$ determines an element of $E_{X/Y}(T_\lambda)$ which maps to $w$ so
the map in the lemma is surjective.

That the map is injective follows immediately from~\cite[Thm.~8.8.2 (i)]{egaIV}
since if $\map{w_\lambda}{W_\lambda}{X\times_Y T_\lambda}$ is an object of
$E_{X/Y}(T_\lambda)$ then $W_\lambda$ is quasi-compact and quasi-separated and
$w_\lambda$ is locally of finite presentation.
\end{proof}

The following lemma is well-known for separated unramified morphisms.

\begin{lemma}\label{L:unramified-over-strictly-henselian}
Let $S=\Spec(A)$ be the spectrum of a strictly henselian local ring with closed
point $s$, let $X$ be an algebraic space and let $X\to S$ be an unramified
morphism.
\begin{enumerate}
\item Let $\map{x}{\Spec(k(s))}{X_s}$ be a point in the closed fiber.
Then the henselian local scheme $X(x):=\Spec(\sO_{X,x})$ is an open subscheme
of $X$ and $X(x)\to S$ is a closed immersion. In particular, $X=X_1\cup X_2$ is
a union of open subspaces where $X_1$ is a scheme and $X_2\cap X_s=\emptyset$.
\item There is a one-to-one correspondence between points of $|X_s|$ and
non-empty open subspaces $W\subseteq X$ such that $W\to S$ is a closed
immersion. This correspondence takes $x\in |X_s|$ to $X(x)\subseteq X$ and
$W\subseteq X$ to $W\cap |X_s|$.
\end{enumerate}
\end{lemma}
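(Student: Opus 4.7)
My plan is to reduce to the scheme case of the \etale-local embedding theorem for unramified morphisms (EGA IV, 18.4.7) and exploit strict henselianness to trivialize the \etale{} neighborhood, then descend from an \etale{} presentation back to $X$. The preliminary observation is that since $k(s)$ is separably closed and $X\to S$ is unramified, the fiber $X_s$ is a disjoint union of copies of $\Spec k(s)$ indexed by $|X_s|$, so every $x\in|X_s|$ has residue field $k(s)$.

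For part~(i), choose an \etale{} presentation $\map{p}{X'}{X}$ with $X'$ a scheme and pick $x'\in X'$ lying over $x$. Applying the scheme-theoretic local embedding to the unramified morphism $X'\to S$ yields an \etale{} neighborhood $(T,t)\to(S,s)$ and an open $V''\subseteq X'\times_S T$ around a lift of $x'$ such that $V''\to T$ is a closed immersion. Strict henselianness provides a section $S\to T$ of this pointed \etale{} morphism, and pulling back along the section produces an open $V'\subseteq X'$ containing $x'$ with $V'\to S$ a closed immersion. Let $V=p(V')\subseteq X$, which is open. The composition $V'\to V\to S$ is a closed immersion, hence a monomorphism, which forces $V'\to V$ to be a monomorphism as well. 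Combined with being \etale{} and surjective (the latter by the definition of $V$ as the image), this makes $V'\to V$ an isomorphism, so $V\to S$ is itself a closed immersion. Since $V$ is a non-empty closed subscheme of the local scheme $S$, it is local with unique closed point $x$, whence $V=\Spec\sO_{X,x}=X(x)$.

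The decomposition $X=X_1\cup X_2$ now takes $X_1=\bigcup_{x\in|X_s|}X(x)$ and $X_2=X\setminus X_s$: the latter is open because $X_s$ is closed, the former is a scheme because it is covered by the open affine subschemes $X(x)$, and the union covers $X$ because each point of $|X_s|$ lies in its own $X(x)\subseteq X_1$ while every other point lies in $X_2$. For~(ii), the map $x\mapsto X(x)$ is well defined by~(i); conversely, any non-empty open $W\subseteq X$ with $W\to S$ a closed immersion is of the form $\Spec(A/I)$ for a proper ideal $I$, hence is local with a unique closed point $w\in|X_s|$, and $X(w)=\Spec\sO_{W,w}=W$, so the two assignments are mutually inverse.

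The step requiring the most care is the descent from the \etale{} presentation back to $X$ itself; the crucial point is that, because the composition to $S$ is already a monomorphism, the \etale{} surjection $V'\to V$ is forced to be a monomorphism and hence an isomorphism, which is what lets the argument go through for algebraic spaces without any separation hypothesis on $X\to S$.
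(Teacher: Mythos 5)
Your proof is correct, and its overall skeleton matches the paper's: pass to an \'etale presentation of $X$ by a scheme, use strict henselianness of $S$ to produce an open subscheme of the presentation that is a closed immersion into $S$, and then descend to $X$ by observing that a morphism whose composite to $S$ is a monomorphism is itself a monomorphism, so the \'etale surjection from the presentation onto its image is an isomorphism. The one genuine difference is the scheme-level input. You invoke \cite[Cor.~18.4.7]{egaIV} to get a closed immersion only after an \'etale base change $(T,t)\to(S,s)$, and then use a section of this pointed \'etale neighborhood (which exists because $k(s)$ is separably closed) to pull the embedding back down to $S$. The paper instead chooses the presentation $V\to X$ to be a \emph{separated} scheme and applies the henselian structure theory of separated quasi-finite morphisms directly: $\Spec(\sO_{V,v})$ is then an open and closed neighborhood of $v$ that is finite over $S$, and a finite unramified algebra over a strictly henselian local ring with trivial residue extension is a quotient, hence a closed immersion. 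The paper's route is shorter because the henselian splitting does the work of both 18.4.7 and the section in one step; your route has the mild advantage of not needing to arrange a separated presentation, at the cost of an extra base-change-and-section maneuver. Both descents to $X$ and both treatments of part (ii) are essentially identical, and your explicit verification that the two assignments in (ii) are mutually inverse fills in what the paper leaves as ``follows immediately from the first.''
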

\begin{proof}
Let $V\to X$ be an \etale{} presentation with $V$ a separated scheme and choose
a lifting $\map{v}{\Spec(k(s))}{V_s}$ of $x$. Then $V_1=\Spec(\sO_{V,v})\iso
X(x)$ is an open and closed neighborhood of $v$ and $V_1\to S$ is finite and
hence a closed immersion. It follows that $X(x)\iso V_1\to X$ is an open
immersion. The second statement follows immediately from the first.
%
\end{proof}

\begin{lemma}\label{L:points-of-small-E}
Let $\map{f}{X}{Y}$ be an unramified morphism of algebraic spaces and let
$\overline{y}\to Y$ be a geometric point. The stalk
$(E_{X/Y,\et})_{\overline{y}}$ equals $|X_{\overline{y}}|\cup \{\emptyset\}$
where $|X_{\overline{y}}|$ is the underlying set of the geometric fiber
$X_{\overline{y}}=X\times_Y \Spec(k(\overline{y}))$.
\end{lemma}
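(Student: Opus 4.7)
The plan is to compute the stalk by identifying it with the value of $E_{X/Y}$ on the strict henselization and then invoking Lemma~\ref{L:unramified-over-strictly-henselian}.

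First I would write the stalk as the filtered colimit
\[
(E_{X/Y,\et})_{\overline{y}} = \varinjlim_{(U,\overline{u})} E_{X/Y}(U),
\]
where $(U,\overline{u})$ runs over the \etale{} neighborhoods of $\overline{y}$. The limit $\varprojlim U = \Spec(\sO^{sh}_{Y,\overline{y}})$ is (pro-\etale{} over $Y$, hence) affine over $Y$, so by Lemma~\pref{L:etale-envelope-is-loc-of-fp} this colimit is naturally in bijection with $E_{X/Y}\bigl(\Spec(\sO^{sh}_{Y,\overline{y}})\bigr)$.

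Next I would set $S=\Spec(\sO^{sh}_{Y,\overline{y}})$ with closed point $s$, and consider the base change $f_S\colon X\times_Y S\to S$, which is again unramified. By Definition~\pref{D:etale-envelope:repr} and compatibility of the defining diagrams with base change, $E_{X/Y}(S)$ is the set of open subspaces $W\subseteq X\times_Y S$ such that the projection $W\to S$ is a closed immersion. Lemma~\pref{L:unramified-over-strictly-henselian}~(ii) applied to $f_S$ identifies this set, as a pointed set, with $|(X\times_Y S)_s|\cup\{\emptyset\}$.

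Finally I would observe that since $\sO^{sh}_{Y,\overline{y}}$ is strictly henselian with residue field the separable closure $k(\overline{y})$ of $k(s)$ inside $\overline{y}$, the closed fiber $(X\times_Y S)_s$ is canonically isomorphic to the geometric fiber $X_{\overline{y}}=X\times_Y\Spec(k(\overline{y}))$; hence $|(X\times_Y S)_s|=|X_{\overline{y}}|$, which gives the claim. The only mildly subtle step is the appeal to Lemma~\pref{L:etale-envelope-is-loc-of-fp}, since $X$ and the objects $W$ in the defining diagrams need not themselves be finitely presented over $Y$; but the lemma is stated precisely for this inverse system, so no further work is required.
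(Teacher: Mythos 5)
Your argument is correct and follows the same route as the paper: express the stalk as a filtered colimit over (affine) \'etale neighborhoods, identify it with $E_{X/Y}\bigl(\Spec(\sO^{sh}_{Y,\overline{y}})\bigr)$ via Lemma~\pref{L:etale-envelope-is-loc-of-fp}, and conclude with Lemma~\pref{L:unramified-over-strictly-henselian}~(ii). The paper's proof is exactly this, stated more tersely.
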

\begin{proof}
Let $Y(\overline{y})=\Spec(\sO_{Y,\overline{y}})$ denote the strict
henselization of $Y$ at $\overline{y}$.
We have that $(E_{X/Y,\et})_{\overline{y}}=\varinjlim_{U} E_{X/Y}(U)$
where the limit is over all \etale{} neighborhoods $U\to Y$ of $\overline{y}$.
The induced map $(E_{X/Y,\et})_{\overline{y}}\to
E_{X/Y}\bigl(Y(\overline{y})\bigr)$ is a bijection since the functor $E_{X/Y}$
is locally of finite presentation. The latter set equals
$|X_{\overline{y}}|\cup \{\emptyset\}$ by
Lemma~\pref{L:unramified-over-strictly-henselian} (ii).
\end{proof}

\begin{lemma}\label{L:E-is-loc-constructible}
The sheaf $E_{X/Y}$ is \emph{locally constructible}, i.e., for any scheme $T$
and morphism $\map{\pi}{T}{Y}$, there is a natural isomorphism
$\pi^{-1}{E_{X/Y,\et}}\to E_{X\times_Y T/T,\et}$.
\end{lemma}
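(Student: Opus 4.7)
The plan is to construct a natural morphism $\alpha\colon \pi^{-1}E_{X/Y,\et}\to E_{X\times_Y T/T,\et}$ explicitly and then verify it is an isomorphism by computing geometric stalks with Lemma~\pref{L:points-of-small-E}.

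First, I construct $\alpha$. For any étale morphism $U\to Y$, an element of $E_{X/Y,\et}(U)$ is an open immersion $W\inj X\times_Y U$ such that $W\to U$ is a closed immersion. Given an étale $V\to T$ equipped with a $Y$-morphism $V\to U$, base change along $V\to U$ produces $W\times_U V\inj (X\times_Y T)\times_T V$ with $W\times_U V\to V$ a closed immersion, hence an element of $E_{X\times_Y T/T}(V)$. Letting $V\to T$ range over the small étale site of $T$ and $U$ over étale neighborhoods of the image of $V$ in $Y$ defines a map of presheaves; sheafification gives $\alpha$.

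Next, I check $\alpha$ on geometric stalks. Fix a geometric point $\bar t\to T$ with image $\bar y\to Y$. The standard behavior of the pullback between small étale sites gives $(\pi^{-1}E_{X/Y,\et})_{\bar t}=(E_{X/Y,\et})_{\bar y}$, which by Lemma~\pref{L:points-of-small-E} equals $|X_{\bar y}|\cup\{\emptyset\}$. Applying the same lemma to the unramified morphism $X\times_Y T\to T$ yields $(E_{X\times_Y T/T,\et})_{\bar t}=|(X\times_Y T)_{\bar t}|\cup\{\emptyset\}$. Since geometric fibers are preserved under base change we have $(X\times_Y T)_{\bar t}=X_{\bar y}$, so the two stalks are canonically the same set.

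Finally, I check that $\alpha$ on stalks realizes this identification. Unwinding through Lemma~\pref{L:unramified-over-strictly-henselian}, a point $x\in |X_{\bar y}|$ corresponds on the $Y$-side to the open and closed subspace $X(x)\subseteq X\times_Y Y(\bar y)$ that is closed over $Y(\bar y)$. The morphism $\alpha$ sends this to its base change $X(x)\times_{Y(\bar y)}T(\bar t)\subseteq X\times_Y T(\bar t)$, which by the same lemma applied on the $T$-side corresponds to the same point $x\in |(X\times_Y T)_{\bar t}|=|X_{\bar y}|$. Hence $\alpha$ is a bijection on all geometric stalks and therefore an isomorphism of étale sheaves on $T$. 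The main potential obstacle is purely the bookkeeping involved in lining up the two applications of Lemma~\pref{L:points-of-small-E} with the construction of $\alpha$; the substantive input — that the stalks of $E_{X/Y,\et}$ are essentially the geometric fibers of $X$ together with a basepoint — is already in hand.
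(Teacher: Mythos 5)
Your proof is correct and follows essentially the same route as the paper: both construct the natural comparison map (the paper via the adjunction $E_{X/Y,\et}\to\pi_*E_{X\times_Y T/T,\et}$, you by spelling out the same pullback-of-diagrams map) and then reduce to a stalkwise check via Lemma~\pref{L:points-of-small-E}. Your extra step verifying that the map actually realizes the identification of stalks is a worthwhile piece of bookkeeping that the paper leaves implicit.
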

\begin{proof}
There is a natural transformation $E_{X/Y,\et}\to \pi_{*}{E_{X\times_Y
T/T,\et}}$ and hence by adjunction a natural transformation
$\map{\varphi}{\pi^{-1}{E_{X/Y,\et}}}{E_{X\times_Y T/T,\et}}$. It is enough to
verify that $\varphi$ is an isomorphism on geometric points. This follows from
Lemma~\pref{L:points-of-small-E}.
\end{proof}

\begin{proposition}\label{P:etale-envelope-is-representable}
The sheaf $E_{X/Y}$ is an algebraic space and the natural morphism
$\map{e}{E_{X/Y}}{Y}$ is \etale{} and representable.
\end{proposition}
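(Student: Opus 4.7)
The plan is to exhibit $E_{X/Y}$ as the quotient of an \etale{} equivalence relation on an algebraic space \etale{} over $Y$; algebraicity of $E_{X/Y}$ and \etaleness{} of $e$ then follow by standard descent.

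For the presentation, first note that $j\colon Y \to E_{X/Y}$ is well-defined by assigning to any $T \to Y$ the distinguished element $W = \emptyset \in E_{X/Y}(T)$. Next, for each pair $(U, W)$ consisting of an \etale{} morphism $U \to Y$ with $U$ a scheme and an element $W \in E_{X/Y}(U)$, the element $W$ determines a morphism $\sigma_{U,W}\colon U \to E_{X/Y}$. Take $I$ a set of such pairs large enough that for every geometric point $\bar{y} \to Y$, each element of the stalk $(E_{X/Y,\et})_{\bar{y}} = |X_{\bar{y}}| \cup \{\emptyset\}$ is hit by some $\sigma_{U,W}$. Such $I$ exists: the distinguished element is hit by $j$, and for each $x \in |X_{\bar{y}}|$, Lemma~\pref{L:unramified-over-strictly-henselian} supplies the required open $X(x) \subseteq X \times_Y Y(\bar{y})$ with closed immersion to $Y(\bar{y})$, which spreads out to some \etale{} neighborhood $(U, W)$ by Lemma~\pref{L:etale-envelope-is-loc-of-fp}. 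Set $V := Y \amalg \coprod_{(U, W) \in I} U$, with the tautological morphism $\varphi\colon V \to E_{X/Y}$; this is surjective on geometric points by construction and Lemma~\pref{L:points-of-small-E}.

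The main step is to verify that the relation $R := V \times_{E_{X/Y}} V$ is \etale{} over $V$. The plan is to prove the stronger statement that the inclusion $R \hookrightarrow V \times_Y V$ is an open immersion. Restricting to a pair of summands $(U, W)$ and $(U', W')$ of $V$, an $S$-point of $U \times_{E_{X/Y}} U'$ is an $S$-point $(u, u') \in (U \times_Y U')(S)$ subject to the condition that $W_S := W \times_{U, u} S$ and $W'_S := W' \times_{U', u'} S$ coincide as open subspaces of $X \times_Y S$. The locus in $S$ cut out by this condition is open: the closed subspace $W_S \setminus (W_S \cap W'_S) \subseteq W_S$ (the complement of an open) maps to a closed subspace of $S$ under the closed immersion $W_S \to S$, and symmetrically for $W'_S \setminus (W_S \cap W'_S)$; the union of these two closed subspaces of $S$ is the disagreement locus, whose open complement is precisely the agreement locus. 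Hence $R \hookrightarrow V \times_Y V$ is an open immersion, and since $V \to Y$ is \etale{}, both projections $R \to V$ are \etale{}.

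By \etale{} descent for algebraic spaces, the quotient $V/R$ is an algebraic space with an \etale{} morphism to $Y$. Surjectivity of $\varphi$ on stalks together with the defining property of $R$ shows that $\varphi$ induces an isomorphism $V/R \iso E_{X/Y}$ of sheaves. Thus $E_{X/Y}$ is an algebraic space and $e$ is \etale{}; representability of $e$ is automatic since both source and target are algebraic spaces. The main obstacle I anticipate is the openness of the agreement locus in the key step, which rests on the observation that a closed subspace of $W_S$ projects to a closed subspace of $S$ through the closed immersion $W_S \hookrightarrow S$.
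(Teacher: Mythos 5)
Your proposal is correct in substance but takes a genuinely different route from the paper's one-line proof. The paper computes the stalks of $E_{X/Y,\et}$ (Lemma~\pref{L:points-of-small-E}), upgrades this to local constructibility (Lemma~\pref{L:E-is-loc-constructible}), and then simply invokes the general equivalence between sheaves on the small \etale{} site of $Y$ and algebraic spaces \etale{} over $Y$ (SGA4, Exp.~IX / Milne, Ch.~V); you instead build an explicit \etale{} atlas $V\to E_{X/Y}$ and verify by hand that $R=V\times_{E_{X/Y}}V\to V\times_Y V$ is an open immersion. Your key computation --- that the agreement locus of two sections $W,W'$ is the open complement of the images of $W\setminus(W\cap W')$ and $W'\setminus(W\cap W')$ under the closed immersions to the base --- is correct (both implications check out on $T$-points, not just topologically) and amounts to a direct proof that the diagonal of $e$ is an open immersion, which is exactly what the cited general theorem hides. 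Your route buys self-containedness, modulo the standard fact that the quotient of an \etale{} equivalence relation is an algebraic space, and is close in spirit to the alternative sketched in the paper's remark after the proposition; the paper's route buys brevity and the bonus characterization of when $e$ is finitely presented. One step you should tighten: to conclude $V/R\iso E_{X/Y}$ you need $V\to E_{X/Y}$ to be an epimorphism of \emph{big} \etale{} sheaves, i.e., surjective on stalks at geometric points $\overline{t}$ of arbitrary $Y$-schemes $T$, whereas your appeal to Lemma~\pref{L:points-of-small-E} only gives surjectivity on stalks over the small site of $Y$. This is precisely the content of Lemma~\pref{L:E-is-loc-constructible}, which you should either invoke or reprove by rerunning your stalk argument over $T(\overline{t})$: since $E_{X/Y}$ restricted to $\Sch_{/T}$ is $E_{X\times_Y T/T}$, Lemmas~\pref{L:unramified-over-strictly-henselian} and~\pref{L:etale-envelope-is-loc-of-fp} show that the same pairs $(U,W)$ in your index set $I$ still exhaust the stalks after base change. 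With that addition the argument is complete.
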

\begin{proof}
Indeed, this statement is equivalent to Lemma~\pref{L:E-is-loc-constructible},
cf.~\cite[Exp.\ IX, pf.\ Prop.\ 2.7]{sga4} or~\cite[Ch.~V,
Thm.~1.5]{milne_etale_coh}. The space $E_{X/Y}$ is of finite presentation over
$Y$ if and only if the sheaf $E_{X/Y}$ is constructible.
\end{proof}

\begin{remark}
The algebraicity of $E_{X/Y}$ can also be shown as follows (and this is
essentially the method used in the following section). The question is local on
$Y$ so we can assume that $Y$ is affine and choose a
diagram~\eqref{E:local-embedding} as in the beginning of the introduction. It
can then be shown that there is an \etale{} representable and surjective
morphism $E_{X'/Y'}\to E_{X/Y}$ and that $E_{X'/Y'}$ is represented by the
scheme given as the gluing of two copies of $Y'$ along $Y'\setminus X'$.
Lemmas~\pref{L:etale-envelope-is-loc-of-fp}--\pref{L:E-is-loc-constructible}
are corollaries of this result and we do not need to use
Appendix~\ref{A:limits}.
%
\end{remark}

The distinguished section of $E_{X/Y}(Y)$, corresponding to $W=\emptyset$,
gives a section $j$ of $\map{e}{E_{X/Y}}{Y}$. As the diagonal of
$\map{f}{X}{Y}$ is open, we have a morphism $\map{i}{X}{E_{X/Y}}$ corresponding
to the diagonal $\{X\to X\times_Y X\}\in E_{X/Y}(X)$.

\begin{lemma}\label{L:etale-envelope-satisfies-conditions}
The morphism $\map{i}{X}{E_{X/Y}}$ is a closed immersion and $E_{X/Y}\setminus
i(X)=j(Y)$.
\end{lemma}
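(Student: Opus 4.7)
My plan is to exploit the functor-of-points description of $E_{X/Y}$ and compute base changes through it. The crux will be the following identity: for any $T \to E_{X/Y}$ corresponding to an element $w: W \hookrightarrow X \times_Y T$ of $E_{X/Y}(T)$, the fibered product $X \times_{E_{X/Y}} T$ is naturally isomorphic to $W$ over $T$. Granting this, two consequences fall out. First, since being a closed immersion is \'etale-local on the target and $E_{X/Y}$ admits an \'etale cover by schemes $T$ by Proposition~\pref{P:etale-envelope-is-representable}, the identity shows that $i: X \to E_{X/Y}$ is a closed immersion. Second, applying the identity with $T = Y$ and the map $j$---which corresponds to $W = \emptyset$---gives $X \times_{E_{X/Y}} Y = \emptyset$, so $i(X) \cap j(Y) = \emptyset$.

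To establish the identity I unwind the universal property. A morphism $S \to X \times_{E_{X/Y}} T$ is a pair $(g, h)$ with $g: S \to X$ and $h: S \to T$ over $Y$ inducing the same element of $E_{X/Y}(S)$. By the definition of $i$ via the diagonal $\Delta: X \hookrightarrow X \times_Y X$, the first map produces the graph $(g, \id{S}): S \hookrightarrow X \times_Y S$, while the second produces $W \times_T S \hookrightarrow X \times_Y S$. Since the graph is a section of $\pi_2: X \times_Y S \to S$, equality of these two open subschemes of $X \times_Y S$ forces $W \times_T S = S$, hence $h$ to factor through the closed immersion $W \hookrightarrow T$; once this factoring is made, $g$ is determined as the composition $S \to W \to X$. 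This gives a natural bijection with $\Hom_T(S, W)$, so $X \times_{E_{X/Y}} T \cong W$ as required.

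To upgrade disjointness to the equality $j(Y) = E_{X/Y} \setminus i(X)$, I note that $j$ is a section of the \'etale morphism $e$ and hence an open immersion; so it remains to show that $i(X) \cup j(Y)$ surjects onto $E_{X/Y}$, which may be checked on geometric points. By Lemma~\pref{L:points-of-small-E}, the stalk of $E_{X/Y,\et}$ at a geometric point $\overline{y} \to Y$ equals $|X_{\overline{y}}| \cup \{\emptyset\}$; the distinguished element is hit by $j(\overline{y})$, and each $x \in |X_{\overline{y}}|$ is hit by $i$ applied to a geometric lift $\overline{x} \to X$ of $x$, as read off from the diagonal description of $i$ combined with the bijection in Lemma~\pref{L:unramified-over-strictly-henselian}.

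The main obstacle is the verification of the key identity above, specifically the implication that equality of the two open subschemes $(g, \id{S})(S)$ and $W \times_T S$ inside $X \times_Y S$ forces $h$ to factor through $W$. The point is that the equality matches a subscheme which is a section of $\pi_2$ against one which is a priori only a closed immersion via $\pi_2$; hence the latter must also be a section, which is exactly the desired factorization.
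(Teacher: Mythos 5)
Your argument is correct, and its core coincides with the paper's: the identity $X\times_{E_{X/Y}}T\iso W$ for a morphism $T\to E_{X/Y}$ classified by $W\subseteq X\times_Y T$ is precisely the first of the two cartesian squares exhibited in the paper's proof, and you supply the verification that the paper dismisses as ``straight-forward'' (matching the graph of $g$, a section of $\pi_2$, against the subscheme $W\times_T S$ forces $W\times_T S\to S$ to be an isomorphism, hence $h$ to factor through the monomorphism $W\inj T$). Where you genuinely diverge is the complement statement. The paper writes down a second cartesian square of the same kind, showing that the pull-back of $j$ along $T\to E_{X/Y}$ is the open immersion $T\setminus W\to T$; this yields $E_{X/Y}\setminus i(X)=j(Y)$ by pure functor-of-points bookkeeping, with no further input. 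You instead prove disjointness by specializing your identity to $T=Y$, $W=\emptyset$, and then prove that $i\amalg j$ is surjective by passing to geometric points via Lemma~\pref{L:points-of-small-E}. That route is valid --- since $j(Y)$ and $E_{X/Y}\setminus i(X)$ are both open subspaces, set-theoretic equality suffices, and $j$ is indeed an open immersion as a section of the \'etale morphism $e$ --- but it imports the stalk computation (and hence Lemma~\pref{L:unramified-over-strictly-henselian} and the limit results behind it) into a statement the paper obtains formally; the second cartesian square is the more economical choice and transfers verbatim to the non-representable case in Section~\ref{S:general-case}.
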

\begin{proof}
Let $T$ be a $Y$-scheme and let $\map{g}{T}{E_{X/Y}}$ be a morphism. To show
that $i$ is a closed immersion, it is enough to show that the pull-back of $i$
along $g$ is a closed immersion. Let $\map{w}{W}{X\times_Y T}$ be the open
immersion corresponding to $g$ so that $\map{\pi_2\circ w}{W}{T}$ is a closed
immersion. Then the squares
$$\vcenter{%
\xymatrix{W\ar[r]^{\pi_1\circ w}\ar@{(->}[d]^{\pi_2\circ w} & X\ar[d]^{i}\\
T\ar[r]^g & E_{X/Y}}}\quad\text{and}\quad
\vcenter{%
\xymatrix{T\setminus W\ar[r]\ar[d] & Y\ar[d]^{j}\\
T\ar[r]^g & E_{X/Y}}}$$
are commutative. The verification that these squares are cartesian is
straight-forward.
\end{proof}


\begin{lemma}\label{L:etale-envelope-is-unique}
The triple $\map{e}{E_{X/Y}}{Y}$, $\map{i}{X}{E_{X/Y}}$, $\map{j}{Y}{E_{X/Y}}$,
is determined up to unique isomorphism by the condition that $E_{X/Y}\setminus
i(X)=j(Y)$.
\end{lemma}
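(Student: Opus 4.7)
The plan is to use the functorial description of $E_{X/Y}$ from Definition~\ref{D:etale-envelope:repr} to construct a canonical $Y$-morphism $\varphi\colon E' \to E_{X/Y}$ for any alternative triple $(e',i',j')$ on an algebraic space $E'$, and then to verify that $\varphi$ is an isomorphism, unique with the stated compatibilities.

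I construct $\varphi$ by exhibiting a canonical element of $E_{X/Y}(E')$. Take $W := X$, embedded in $E'$ as a closed subspace via $i'$, together with the morphism $w := (\id{X}, i')\colon X \to X \times_Y E'$. The first projection $X \times_Y E' \to X$ is étale, being a base change of $e'$, and $w$ is a section of it; since the diagonal of any unramified morphism is an open immersion, sections of unramified (in particular étale) morphisms are open immersions, so $w$ is an open immersion. Thus $(W,w)$ is an element of $E_{X/Y}(E')$, and by Yoneda together with the representability of $E_{X/Y}$ (Proposition~\ref{P:etale-envelope-is-representable}) it corresponds to a $Y$-morphism $\varphi\colon E' \to E_{X/Y}$. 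Pulling the element back along $i'$ produces the diagonal $X \to X \times_Y X$, which is the element defining $i$, so $\varphi \circ i' = i$; pulling back along $j'$ produces the empty subspace (since $i'(X) \cap j'(Y) = \emptyset$), which is the element defining $j$, so $\varphi \circ j' = j$.

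Next, I show $\varphi$ is an isomorphism. As a morphism between two spaces étale over $Y$, $\varphi$ is étale. For each geometric point $\bar{y} \to Y$, the decomposition $E' = i'(X) \cup j'(Y)$ yields $|E'_{\bar{y}}| = |X_{\bar{y}}| \cup \{j'(\bar{y})\}$, and Lemma~\ref{L:points-of-small-E} provides the same description of $|E_{X/Y,\bar{y}}|$. The relations $\varphi \circ i' = i$ and $\varphi \circ j' = j$ force $\varphi_{\bar{y}}$ to be a bijection of these fibers. Consequently every geometric fiber of $\varphi$ over $E_{X/Y}$ is a single point, so $\varphi$ is étale and universally injective, hence an open immersion; being surjective, $\varphi$ is an isomorphism.

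For uniqueness, suppose $\varphi_1,\varphi_2\colon E' \to E_{X/Y}$ both satisfy the compatibilities. Consider $(\varphi_1,\varphi_2)\colon E' \to E_{X/Y} \times_Y E_{X/Y}$. The diagonal $\Delta$ is an open immersion since $e$ is unramified, so the preimage $U \subseteq E'$ of $\Delta(E_{X/Y})$ is an open subspace on which $\varphi_1 = \varphi_2$. The set $|U|$ contains $i'(X)$ and $j'(Y)$ because $\varphi_1$ and $\varphi_2$ agree after precomposing with $i'$ and $j'$, and their union is $|E'|$, so $U = E'$ and $\varphi_1 = \varphi_2$. The only nontrivial step in the plan is verifying the open-immersion property of the section $(\id{X}, i')$; everything else is Yoneda bookkeeping and standard properties of étale morphisms.
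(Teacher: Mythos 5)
Your proof is correct and follows essentially the same route as the paper: $\varphi$ is obtained from the graph of $i'$ viewed as an element of $E_{X/Y}(E')$, shown to be a surjective, universally injective \'etale morphism (hence an isomorphism), and uniqueness rests on the openness of the diagonal of the \'etale morphism $e$. The only cosmetic difference is that you phrase uniqueness via the agreement locus of two candidate maps rather than by pinning down the graph of $\varphi$ as the union of the images of $(i',i)$ and $(j',j)$.
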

\begin{proof}
Let $\map{e'}{E'}{Y}$, $\map{i'}{X}{E'}$ and $\map{j'}{Y}{E'}$ be another
triple of an \etale{} morphism, a closed immersion and an open immersion such
that $E'\setminus i'(X)=j'(Y)$. There is only one possible morphism
$\map{\varphi}{E'}{E_{X/Y}}$ such that $i=\varphi\circ i'$ and $j=\varphi\circ
j'$, since the graph of $\varphi$ --- an open subset of $E'\times_Y E_{X/Y}$
--- would be given as the union of the images of $\map{(i',i)}{X}{E'\times_Y
E_{X/Y}}$ and $\map{(j',j)}{Y}{E'\times_Y E_{X/Y}}$.

The graph of the map $i'$ determines an element of $E_{X/Y}(E')$, i.e., a
morphism $\map{\varphi}{E'}{E_{X/Y}}$, such that $i=\varphi\circ i'$ and
$j=\varphi\circ j'$. As $\varphi$ is a bijective \etale{} monomorphism, it is
an isomorphism.
\end{proof}

\begin{proof}[Proof of Theorem~\pref{T:main-theorem} (representable case)]
We postpone the proof of the existence and uniqueness of $E_{X/Y}$ for
non-representable morphisms $\map{f}{X}{Y}$ to the following
section. Similarly, for now, we only prove the functorial properties
\ref{TI:etale-pushfwd} and \ref{TI:closed-pullback} in the representable case.

The existence of $\map{e}{E_{X/Y}}{Y}$, $i$ and $j$ with the required
properties, for an unramified morphism $\map{f}{X}{Y}$ of algebraic spaces,
follows from Proposition~\pref{P:etale-envelope-is-representable} and
Lemma~\pref{L:etale-envelope-satisfies-conditions}. The triple $(e,i,j)$ is
unique with these properties by Lemma~\pref{L:etale-envelope-is-unique}.
That the triple commutes with base change follows from the uniqueness or
directly from the functorial description.

If $Y$ is an algebraic stack and $\map{f}{X}{Y}$ is a representable unramified
morphism, then we construct the representable and \etale{} morphism $E_{X/Y}\to
Y$ locally on $Y$~\cite[Ch.~14]{laumon}. We can also treat $E_{X/Y}$ as a
cartesian lisse-\etale{} sheaf of sets on $Y$.

This settles~\ref{TI:uniqueness} and~\ref{TI:base-change} in the representable
case. \ref{TI:iso} is trivial.
\ref{TI:sep} If $E_{X/Y}\to Y$ is separated then $j$ is closed and $i$ is open
and it follows that $f$ is \etale{} and separated. If $f$ is \etale{} then
$E_{X/Y}=X\amalg Y$ and $E_{X/Y}\to Y$ is separated if and only if $f$
is separated.
\ref{TI:uc,qc,repr} That $E_{X/Y}\to Y$ is universally
closed (resp.\ quasi-compact, resp.\ representable) if and only if $f$ is so,
follows from the fact that $i$ is a closed immersion and that $i\amalg j$ is
a surjective monomorphism (hence stabilizer preserving).

\ref{TI:qs,finite-pres} If $\map{e}{E_{X/Y}}{Y}$ is quasi-separated
then $j$ is quasi-compact so that $i$ is of constructible finite type
by Proposition~\pref{P:closed-imm-constructible}. It follows that $f=e\circ
i$ is quasi-separated and locally of constructible finite type. Conversely, if
$f$ is quasi-separated and locally of constructible finite type, then so is $i$
by Proposition~\pref{P:loc-constructible-type}. Hence $j$ is quasi-compact and,
a fortiori, so is $\map{i\amalg j}{X\amalg Y}{E_{X/Y}}$. As $f\amalg
\id{Y}=e\circ (i\amalg j)$ is quasi-separated it follows that
$\map{e}{E_{X/Y}}{Y}$ is quasi-separated. Finally, note that $e$ is
finitely presented if and only if $e$ is quasi-compact and quasi-separated
and that $f$ is of constructible finite type if and only if $f$ is
quasi-compact, quasi-separated and locally of constructible finite type.

\ref{TI:etale-pushfwd} and \ref{TI:closed-pullback} (representable case) Let
$\map{g}{V}{X}$ be \etale{} (resp.\ a closed immersion). We will construct a
morphism $\map{g_*}{E_{V/Y}}{E_{X/Y}}$ (resp.\ $\map{g^*}{E_{X/Y}}{E_{V/Y}}$)
using the functorial description.

In the \etale{} case, an element of $E_{V/Y}(T)$ corresponding to an open
subspace $W\subseteq V\times_Y T$ is mapped to the element corresponding to the
composition $W\to V\times_Y T\to X\times_Y T$. This composition, a priori only
\etale{}, is an open immersion since $W\to T$ is a closed immersion. That the
pull-back of $i_X$ (resp.\ $j_X$) along $g_*$ is $i_V$ (resp.\ $j_V$) is easily
verified. If $g$ is an open immersion, then $g_*$ is a monomorphism and hence
an open immersion.

In the case of a closed immersion, an element of $E_{X/Y}(T)$ corresponding to
an open subspace $W\subseteq X\times_Y T$ is mapped to the pull-back
$g_T^{-1}W\subseteq V\times_Y T$. If $\map{y}{\Spec(k)}{Y}$ is a point, then
the morphism $\map{g^*_y}{E_{X_y/y}=X_y\cup \{y\}}{E_{V_y/y}=V_y\cup \{y\}}$
is an isomorphism over the open and closed subscheme $V_y\cup \{y\}$ and maps
$X_y\setminus V_y$ onto the distinguished point $y$. It follows that
$i_V=g^*\circ i_X\circ g$, that $j_V=g^*\circ j_X$, that $g^*$ is surjective
and that $g^*$ is a monomorphism if and only if $g$ is bijective.

\ref{TI:loc-immersion} If $\map{e}{E_{X/Y}}{Y}$ is a local isomorphism, then
$f=e\circ i$ is a local immersion. Conversely, assume that $f$ is a local
immersion. The question whether $e$ is a local isomorphism is Zariski-local on
$E_{X/Y}$ and $Y$. We can thus, using~\ref{TI:etale-pushfwd}, assume that $f$
is a closed immersion. Then $E_{X/Y}=Y\cup_{Y\setminus X} Y\to Y$ is a local
isomorphism.
\end{proof}
\end{section}


\begin{section}{The general case}\label{S:general-case}
In this section we prove Theorem~\pref{T:main-theorem} for general
unramified morphisms of stacks.

\begin{definition}\label{D:etale-envelope:general}
If $\map{f}{X}{Y}$ is any (not necessarily representable) unramified morphism,
then we define a stack $E_{X/Y}$ over $\Sch_{/Y}$ (with the \etale{} topology)
as follows. The objects of the category $E_{X/Y}$ are $2$-commutative diagrams
%
%
$$\xymatrix{W\ar[r]^p\ar@{(->}[d]^q\drtwocell<\omit>{\varphi} & X\ar[d]\\
T\ar[r] & Y}$$
such that $T$ is a scheme, $\map{(p,\varphi,q)}{W}{X\times_Y T}$ is \etale{}
and $q$ is a closed immersion. Morphisms $(p',\varphi',q')\to (p,\varphi,q)$
are $2$-commutative diagrams
%
%
$$\xymatrix{W'\ar[r]\rruppertwocell<8>^{p'}{<-2.5>}\ar@{(->}[d]^{q'}\drtwocell<\omit>
  & W\ar[r]_p\ar@{(->}[d]^q\drtwocell<\omit>{\varphi} & X\ar[d]\\
  T'\ar[r]\rrlowertwocell<-8>{<2.2>} & T\ar[r] & Y}$$
such that the left square is $2$-cartesian and the pasting of the diagram is
$\varphi'$. The functor $E_{X/Y}\to \Sch_{/Y}$ is the functor mapping the
diagrams above onto their bottom rows. By \etale{} descent, the category
$E_{X/Y}$, which is fibered in groupoids, is a stack in the \etale{} topology.
\end{definition}

\begin{lemma}
Let $\injmap{q}{W}{T}$ be a closed immersion and let $Z\to W$ be an \etale{}
morphism of stacks. Then $q_*Z\to T$ is \etale{}. If $Z\to W$ is representable
(resp.\ surjective, resp.\ an open immersion) then so is $q_*Z\to T$. Here
$q_*Z$ denotes the stack over $\Sch_{/T}$ which associates to a scheme $T'\in
\Sch_{/T}$ the groupoid $\catHom_W(W\times_T T',Z)$.
\end{lemma}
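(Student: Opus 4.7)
The assertion is \etale-local on $T$, so I may assume $T$ is an affine scheme. I will first handle the representable case where $Z$ is an algebraic space; the general stack case will then follow by descent along an \etale presentation $V\to Z$ by an algebraic space. Indeed, by direct inspection of its functorial definition, $q_*$ preserves fibre products, so $q_*(V\times_Z V)\rightrightarrows q_*V$ is a groupoid presentation of $q_*Z$ over $T$; once $q_*V$ and $q_*(V\times_Z V)$ are known to be \etale algebraic $T$-spaces, $q_*Z$ is automatically an algebraic stack \etale over $T$.

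For the representable case, my plan is to verify that the \etale sheaf $q_*Z$ on $T$ satisfies the two hypotheses used in the proof of Proposition~\pref{P:etale-envelope-is-representable}, namely that it is locally of finite presentation and locally constructible. Local finite presentation follows from an argument mirroring Lemma~\pref{L:etale-envelope-is-loc-of-fp}: for a filtered inverse limit $T'=\varprojlim T'_\lambda$ of affine $T$-schemes, one has $W\times_T T'=\varprojlim W\times_T T'_\lambda$, and since $Z\to W$ is locally of finite presentation, a $W$-morphism $W\times_T T'\to Z$ descends to some finite level by the limit results of Appendix~\ref{A:limits}. Local constructibility is tautological: $q_*Z(T')$ depends only on $W\times_T T'$ and on $Z$, so formation of $q_*Z$ commutes with arbitrary base change $T''\to T$. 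The geometric stalk of $q_*Z$ at a point $\bar t\in T$ is $Z_{\bar t}$ when $\bar t$ lies in $W$ (using that strict henselization of $T$ at $\bar t$ pulls back $W$ to the strict henselization of $W$ at $\bar t$) and is the singleton otherwise. The representability criterion invoked in the proof of Proposition~\pref{P:etale-envelope-is-representable} then implies that $q_*Z$ is represented by an algebraic space \etale over $T$.

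For the additional properties: representability of $q_*Z\to T$ when $Z\to W$ is representable is the case just established. Surjectivity follows from the stalk calculation, since the fibres over $T\setminus W$ are non-empty singletons and the fibres over $W$ are the stalks of $Z$, non-empty whenever $Z\to W$ is surjective. For the open immersion case, if $Z\hookrightarrow W$ is open then $q_*Z(T')$ is empty or a singleton according to whether $W\times_T T'$ fails or succeeds to factor through $Z\subseteq W$, which identifies $q_*Z$ with the open subscheme $T\setminus q(W\setminus Z)\hookrightarrow T$.

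The principal technical obstacle is the limit argument underpinning local finite presentation: the morphism to be descended lives over the closed subspace $W\times_T T'\subseteq T'$ rather than over $T'$ itself, so the descent must be carried out along the inverse system of closed subschemes $W\times_T T'_\lambda$ rather than along $T'_\lambda$ directly. This is the same sort of technicality handled in Lemma~\pref{L:etale-envelope-is-loc-of-fp}, and the methods of Appendix~\ref{A:limits} apply essentially unchanged; once this is in hand, the remaining verifications are either tautological or reduce to the stalk computation.
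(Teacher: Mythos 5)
Your proof is correct, and its skeleton matches the paper's: reduce to a scheme base, reduce to representable $Z\to W$ by pushing forward an \etale{} presentation (the paper uses $q_*U\to q_*Z$ for a presentation $U\to Z$; your groupoid formulation $q_*(V\times_Z V)\rightrightarrows q_*V$ is equivalent, using that $q_*$ preserves fibre products), and then handle surjectivity and the open-immersion case by the same stalk/fibre computations ($(q_*Z)_{\overline{t}}=Z_{\overline{t}}$ for $t\in W$ and the one-point set for $t\notin W$; $q_*Z=T\setminus(W\setminus Z)$). The genuine divergence is in the key representable step. The paper disposes of it in one line: a representable \etale{} $Z\to W$ \emph{is} a locally constructible sheaf on $W$, and $q_*$ along a closed immersion preserves local constructibility by the proper base change theorem, so $q_*Z\to T$ is \etale{} and representable. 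You instead re-run the machinery of Section~\ref{S:representable-case}: local finite presentation of the functor $q_*Z$ via the limit results of Appendix~\ref{A:limits} (correctly identifying the need to pass to the inverse system of closed subschemes $W\times_T T'_\lambda$, which are affine since the $T'_\lambda$ are), a geometric stalk computation, and the representability criterion cited in Proposition~\pref{P:etale-envelope-is-representable}. This is more self-contained --- it avoids invoking proper base change --- at the cost of length; do note, though, that "local constructibility is tautological" overstates things: commutation of the big sheaf $q_*Z$ with base change is tautological, but the isomorphism $\pi^{-1}(q_*Z)_{\et}\to (q_*Z|_{T''})_{\et}$ still requires exactly the stalk-plus-lfp argument you then supply, as in Lemmas~\pref{L:points-of-small-E} and~\pref{L:E-is-loc-constructible}. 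One small slip: the statement allows $T$ to be an algebraic stack, and the properties in question are fppf-local (not merely \etale{}-local) on the target, so the reduction to affine $T$ should be made along a smooth presentation, as in the paper's "fppf-local on $T$"; an \etale{}-local reduction alone does not reach a scheme when $T$ is not Deligne--Mumford.
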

\begin{proof}
The question is fppf-local on $T$ and we can thus assume that $T$ is a scheme.
Then $Z$ is Deligne--Mumford and we can pick an \etale{} presentation
$U\to Z$. It is enough to show that $q_*U\to q_*Z$ and $q_*U\to T$ are
\etale{} and representable and that the first map is surjective. We can thus
assume that $Z\to W$ is representable. Then $Z$ is a locally constructible
sheaf and it follows that $q_*Z$ is locally constructible by the proper base
change theorem, i.e., $q_*Z\to T$ is \etale{} and representable.

If $Z\to W$ is surjective, then so is $q_*Z\to T$. Indeed, this can be checked
on stalks. Let $t\in T$ be a point. If $t\in W$, then
$(q_*Z)_{\overline{t}}=Z_{\overline{t}}\neq
\emptyset$. If $t\notin W$, then $(q_*Z)_{\overline{t}}=Z(\emptyset)$ is the
final object --- the one-point set.

If $Z\to W$ is an open immersion, then $q_*Z=T\setminus (W\setminus Z)$ as
can be checked by passing to fibers.
\end{proof}

\begin{lemma}\label{L:etale-pushfwd}
Let $\map{g}{V}{X}$ be an \etale{} morphism. Then there is a natural \etale{}
morphism $\map{g_*}{E_{V/Y}}{E_{X/Y}}$. If $g$ is representable (resp.\
surjective, resp.\ an open immersion) then so is $g_*$.
\end{lemma}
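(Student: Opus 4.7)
The plan is to construct $g_*$ by post-composition with $g$ on the explicit diagrams of Definition~\pref{D:etale-envelope:general}, and then to reduce the required properties of $g_*$ to the previous lemma by identifying the fiber of $g_*$ over an arbitrary test object as a stack of the form $q_*(-)$.

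First I would define $g_*$ on objects: given a diagram $(\injmap{q}{W}{T}, \map{p}{W}{V}, \varphi)$ representing an object of $E_{V/Y}(T)$, post-composition with $g$ yields a diagram with $\map{g\circ p}{W}{X}$. Since $W\to V\times_Y T\to X\times_Y T$ is a composition of \'etale morphisms and hence \'etale, and since $q$ is unchanged, this is an object of $E_{X/Y}(T)$. Morphisms in $E_{V/Y}$ pass to the evident induced morphisms in $E_{X/Y}$, and the assignment is compatible with pullbacks, so one obtains a morphism of stacks $\map{g_*}{E_{V/Y}}{E_{X/Y}}$.

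To check the properties of $g_*$ I would fix a scheme $T$ together with an object $(\injmap{q}{W}{T}, \map{p}{W}{X})$ of $E_{X/Y}(T)$ and analyze the fiber product $E_{V/Y}\times_{E_{X/Y}} T$. By Definition~\pref{D:etale-envelope:general}, a morphism in $E_{X/Y}$ lying over $\id{T'}$ is a $2$-cartesian square, which forces the closed-immersion data to agree. Consequently, a $T'$-point of the fiber amounts to a factorization of $p_{T'}\colon W\times_T T'\to X$ through $g$, equivalently a $W$-morphism $W\times_T T'\to V\times_X W$. This identifies the fiber with $q_*(V\times_X W)$ in the sense of the previous lemma.

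Finally, since $V\times_X W\to W$ is the base change of $g$ along $\map{p}{W}{X}$, it is \'etale and inherits representability, surjectivity, or the open-immersion property from $g$. Applying the previous lemma to $q$ and $V\times_X W\to W$, I conclude that the fiber $q_*(V\times_X W)\to T$ is \'etale, and representable, surjective, or an open immersion whenever $g$ is so. Since these properties of a morphism of stacks can be tested after pulling back to schemes, the lemma follows. The main obstacle I anticipate is the fiber identification: one must verify carefully that the $2$-categorical structure of $E_{X/Y}$ really does collapse a morphism over $\id{T'}$ into the equality $W'=W\times_T T'$, so that the remaining data is exactly a lift of $p_{T'}$ through $g$.
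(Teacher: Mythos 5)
Your proposal is correct and follows essentially the same route as the paper: define $g_*$ by post-composition with $g$, identify the fiber of $g_*$ over a test object $(\injmap{q}{W}{T},\map{p}{W}{X})$ with the groupoid of liftings of $p$ through $g$, i.e.\ with $q_*(V\times_X W)$, and invoke the preceding lemma on $q_*$ of an \'etale morphism. The point you flag as a potential obstacle --- that a morphism in $E_{X/Y}$ over $\id{T'}$ is forced to identify the closed immersions via the $2$-cartesian left square --- is exactly how the paper's identification works, so there is no gap.
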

\begin{proof}
This is similar to the proof of Theorem~\pref{T:main-theorem}
\ref{TI:etale-pushfwd} in
the representable case. Let $\xi\in E_{V/Y}$ be an object corresponding to
morphisms $\map{p}{W}{V}$, $\injmap{q}{W}{T}$. We let $g_*(\xi)\in E_{X/Y}$ be
the object corresponding to $g\circ p$ and $q$. On morphisms $g_*$ is defined
in the obvious way.

Let $T\to E_{X/Y}$ be a morphism corresponding to morphisms $\map{p}{W}{X}$ and
$\injmap{q}{W}{T}$. If $T'$ is a $T$-scheme, then the $T'$-points of
the pull-back $E_{V/Y}\times_{E_{X/Y}} T\to T$ is the groupoid of liftings of
$\map{p'}{W\times_T T'}{X}$ over $\map{g}{V}{X}$, or equivalently, the groupoid
of sections of $V\times_X W\times_T T'\to W\times_T T'$. This description is
compatible with pull-backs so that $E_{V/Y}\times_{E_{X/Y}} T$ is the stack
$q_*(V\times_X W)$ which is algebraic and \etale{} over $T$ by the previous
lemma. Moreover, if $V\to X$ is representable (resp.\ surjective, resp.\ an
open immersion) then so are $q_*(V\times_X W)\to T$ and $E_{V/Y}\to E_{X/Y}$.%
\end{proof}

\begin{lemma}\label{L:etale-envelope-is-algebraic}
The stack $E_{X/Y}$ is algebraic.
\end{lemma}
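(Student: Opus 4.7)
The plan is to first reduce to the case where $Y$ is a scheme, and then apply Lemma~\ref{L:etale-pushfwd} together with the already established representable case of Theorem~\ref{T:main-theorem}. Directly from Definition~\ref{D:etale-envelope:general}, $E_{X/Y}$ commutes with arbitrary base change on $Y$: for any $Y_0 \to Y$ one has $E_{X \times_Y Y_0 / Y_0} = E_{X/Y} \times_Y Y_0$ as stacks. Choosing $Y_0 \to Y$ to be a smooth presentation by a scheme, the problem reduces by smooth descent to proving that $E_{X_0/Y_0}$ is algebraic when $Y_0$ is a scheme and $X_0 = X \times_Y Y_0$.

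So assume $Y$ is a scheme. Then $\Delta_Y$ is a locally closed immersion, hence unramified, and in the factorization $\Delta_X \colon X \xrightarrow{\Delta_{X/Y}} X \times_Y X \to X \times X$ the first map is \etale{} (since $f$ is unramified) while the second is a base change of $\Delta_Y$. Therefore $\Delta_X$ is unramified and $X$ is a Deligne--Mumford stack, so I may pick an \etale{} representable atlas $\map{g}{V}{X}$ with $V$ a scheme. The composition $V \to Y$ is then unramified (composition of \etale{} and unramified) and representable (as a morphism of schemes), so the representable case of the theorem gives that $E_{V/Y}$ is an algebraic space.

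By Lemma~\ref{L:etale-pushfwd}, the induced morphism $\map{g_*}{E_{V/Y}}{E_{X/Y}}$ is \etale{}, representable, and surjective. Since $g_*$ is representable with algebraic space source, the fiber product $E_{V/Y} \times_{E_{X/Y}} E_{V/Y}$ is an algebraic space, and together with the two projections provides an \etale{} groupoid of algebraic spaces presenting $E_{X/Y}$. Hence $E_{X/Y}$ is Deligne--Mumford, and in particular algebraic.

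The main obstacle, and what forces the reduction to $Y$ a scheme, is that $X$ itself need not be Deligne--Mumford when $Y$ is a general algebraic stack (e.g., the identity $B\mathbb{G}_m \to B\mathbb{G}_m$ is unramified with non-DM source). Before the reduction the factorization above only exhibits $\Delta_X$ as an \etale{} map composed with a representable one, which is too weak to yield an \etale{} atlas on $X$.
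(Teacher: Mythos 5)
Your proof is correct and follows essentially the same route as the paper: base change to a smooth presentation to reduce to $Y$ a scheme, observe that $X$ is then Deligne--Mumford, take an \'etale presentation $V\to X$, and apply Lemma~\ref{L:etale-pushfwd} together with Proposition~\ref{P:etale-envelope-is-representable} to get an \'etale representable surjection from the algebraic space $E_{V/Y}$. The only difference is that you spell out the diagonal argument for why $X$ is Deligne--Mumford, which the paper takes for granted.
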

\begin{proof}
Let $Y'\to Y$ be a smooth presentation. Then $E_{X\times_Y Y'/Y'}\to E_{X/Y}$
is representable, smooth and surjective. Replacing $X$ and $Y$ with $X\times_Y
Y'$ and $Y'$ respectively, we can thus assume that $Y$ is a scheme.

Since $X\to Y$ is unramified, we have that $X$ is a Deligne--Mumford stack. Let
$V\to X$ be an \etale{} presentation. By Lemma~\pref{L:etale-pushfwd}, there is
an \etale{} representable surjection $E_{V/Y}\to E_{X/Y}$ and by
Proposition~\pref{P:etale-envelope-is-representable}, $E_{V/Y}$ is an algebraic
space. This shows that $E_{X/Y}$ is algebraic.
\end{proof}

\begin{proof}[Proof of Theorem~\pref{T:main-theorem} (general case)]
We have already proved that $E_{X/Y}$ is algebraic in
Lemma~\pref{L:etale-envelope-is-algebraic} and as in the representable case, we
can define morphisms $\map{i}{X}{E_{X/Y}}$ and $\map{j}{Y}{E_{X/Y}}$. That $i$
is a closed immersion and $j$ is an open immersion such that $j(Y)$ is the
complement of $i(X)$ follows exactly as in the proof of
Lemma~\pref{L:etale-envelope-satisfies-conditions}.

The uniqueness (which is up to unique $2$-isomorphism) of $E_{X/Y}$, $i$ and
$j$ satisfying $E_{X/Y}\setminus i(X)=j(Y)$ follows as in the proof of
Lemma~\pref{L:etale-envelope-is-unique} (because any morphism $E\to E_{X/Y}$
commuting with $i$ and $j$ is representable).

\ref{TI:etale-pushfwd} is Lemma~\pref{L:etale-pushfwd} and
\ref{TI:closed-pullback} follows exactly as in the representable case.
\end{proof}
\end{section}


\begin{section}{Applications}\label{S:applications}
There are two important consequences of Theorem~\pref{T:main-theorem}. The
first is that the classical description of unramified morphisms as
\etale{}-local embeddings remains valid when the target is not necessary
Deligne--Mumford. The second is that we obtain a \emph{canonical} factorization
of an unramified morphism into a closed immersion and an \etale{} morphism.
The following example illustrates the first consequence.

\begin{example}
It can be shown that if $X\to Y$ is an \etale{}, finitely presented and
representable morphism or a closed immersion of stacks and $\widetilde{X}\to X$
is a blow-up, then there exists a blow-up $\widetilde{Y}\to Y$ and an
$X$-morphism $\widetilde{Y}\times_Y X\to \widetilde{X}$. The analogous result
for a representable unramified morphism $X\to Y$ of constructible finite type
(e.g., of finite presentation) then follows from the existence of the \etale{}
envelope.
\end{example}

In the remainder of the section we outline an application where the canonicity
of the \etale{} envelope is crucial. It is shown
in~\cite{rydh_submersion_and_descent} that quasi-compact universally subtrusive
morphisms (e.g., universally submersive morphisms between noetherian spaces)
are morphisms of effective descent for the fibered category of finitely
presented \etale{} morphisms. Using Theorem~\pref{T:main-theorem} we obtain a
similar effective descent statement for unramified morphisms.

\begin{notation}
Let $\map{g}{S'}{S}$ be a morphism of algebraic spaces. Let $S''=S'\times_S S'$
be the fiber product and let $\map{\pi_1,\pi_2}{S''}{S'}$ be the two
projections.
\end{notation}

\begin{proposition}[Descent]
Let $\map{g}{S'}{S}$ be universally submersive. Let $X\to S$ and $Y\to S$ be
unramified morphisms of algebraic spaces. Then the sequence
$$\xymatrix{
\Hom_S(X_\red,Y_\red)\ar[r]^{g^*}
 & \Hom_{S'}(X'_\red,Y'_\red)\ar@<.5ex>[r]^{\pi_1^*}\ar@<-.5ex>[r]_{\pi_2^*}
 & \Hom_{S''}(X''_\red,Y''_\red)}$$
is exact. Here $X'$ and $Y'$ are the pull-backs of $X$ and $Y$ along $S'\to
S$, and $X''$ and $Y''$ are the pull-backs of $X$ and $Y$ along $S''\to S$.
\end{proposition}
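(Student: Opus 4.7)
The plan is to reformulate the sequence as a descent question for sections of the étale morphism $E_{Y/S}\to S$, and then apply the effective descent theorem of \cite{rydh_submersion_and_descent}. As a preliminary reduction, for any unramified morphism $U\to S$ and any $S$-scheme $T$, I claim that the restriction map $\Hom_S(T,U)\to \Hom_S(T_\red,U)$ is a bijection. Composing with $i_U$ embeds $\Hom_S(T,U)$ in $\Hom_S(T,E_{U/S})$ as the subset of morphisms $\sigma$ satisfying the purely topological condition $\sigma^{-1}\bigl(j_U(S)\bigr)=\emptyset$; since $e_U$ is étale, rigidity (topological invariance of the étale site) gives $\Hom_S(T,E_{U/S})=\Hom_S(T_\red,E_{U/S})$ without affecting that condition. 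Combined with $\Hom_S(T_\red,U)=\Hom_S(T_\red,U_\red)$, applying the claim with $U=Y,Y',Y''$ (and using $Y'=Y\times_S S'$) rewrites the sequence as
\begin{equation*}
\Hom_S(X,Y)\to \Hom_S(X',Y)\rightrightarrows \Hom_S(X'',Y),
\end{equation*}
where moreover $X$ may be assumed reduced.

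Next, by the base-change property~\ref{TI:base-change}, the morphism $V:=E_{Y/S}\times_S X=E_{Y\times_S X/X}$ is étale over $X$, with pull-backs to $X'$ and $X''$ computed by the same formula. A morphism $T\to Y$ over $S$, for $T\in\{X,X',X''\}$ viewed as an $X$-scheme, corresponds to a section of $Y\times_S T\to T$, equivalently, via $i$, to a section of $V\times_X T\to T$ whose image lies in the closed subspace $Y\times_S T\subseteq V\times_X T$. Thus the sequence amounts to the descent of such sections of the étale morphism $V\to X$ along the quasi-compact universally submersive morphism $X'\to X$ (the base change of $g$). This is provided by the effective descent theorem of \cite{rydh_submersion_and_descent}, which gives descent of sections of a finitely presented étale morphism along a quasi-compact universally subtrusive morphism; the auxiliary ``factors through $Y\times_S X$'' constraint is again purely topological (the preimage of the open complement must be empty) and therefore descends along the surjection $X'\to X$ as well.

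The main technical obstacle is that the cited descent theorem applies to \emph{finitely presented} étale morphisms, while $V\to X$ is of finite presentation only when $Y\to S$ is of constructible finite type (part~\ref{TI:qs,finite-pres}). In the general case, one works locally on $Y$: covering $Y$ by open sub-algebraic-spaces on which the restriction of $Y\to S$ is of constructible finite type and using the étale pushforward of~\ref{TI:etale-pushfwd}, one realizes $V\to X$ as a filtered colimit of its finitely presented étale sub-envelopes, to each of which the descent theorem applies, and then passes to the colimit.
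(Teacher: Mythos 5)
Your preliminary claim---that $\Hom_S(T,U)\to\Hom_S(T_\red,U)$ is bijective for every unramified $U\to S$---is false, and the argument breaks there. Take $S=\Spec(k[\epsilon]/\epsilon^2)$, $U=S_\red=\Spec(k)$ and $T=S$: then $\Hom_S(T,U)=\emptyset$ while $\Hom_S(T_\red,U)$ contains the identity. The precise error is the description of the image of $\Hom_S(T,U)\inj\Hom_S(T,E_{U/S})$: in the functorial description of $E_{U/S}$, a section $\sigma$ with $\sigma^{-1}\bigl(j_U(S)\bigr)=\emptyset$ corresponds to an open subspace $W\subseteq U\times_S T$ such that $W\to T$ is a \emph{bijective} closed immersion, i.e.\ a nil-immersion---not an isomorphism. (In the example $E_{U/S}=S\amalg S$, and the second copy gives such a $\sigma$ that does not come from $\Hom_S(T,U)$.) The subset you isolate therefore computes $\Hom_S(T_\red,U_\red)$, not $\Hom_S(T,U)$, and your displayed reformulation $\Hom_S(X,Y)\to\Hom_S(X',Y)\rightrightarrows\Hom_S(X'',Y)$ is not equivalent to the statement. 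This defect is repairable (carry the condition ``$W\to T$ is a nil-immersion'' through instead), but two later steps then still fail. First, the proposition assumes only that $g$ is universally submersive, whereas the effective descent theorem of \cite{rydh_submersion_and_descent} that you invoke requires $g$ to be quasi-compact and universally \emph{subtrusive}; it cannot be quoted here, and it is the wrong tool anyway, since descending \emph{sections} (rather than the \'etale morphism itself) only requires descending their graphs, which are open subsets. Second, the reduction to the finitely presented case does not work: an unramified morphism cannot in general be covered by opens on which it is of constructible finite type (a non-constructible closed immersion is a counterexample, cf.\ Appendix~\ref{A:constructible}), and the passage to a filtered colimit of ``finitely presented sub-envelopes'' is asserted but not justified.

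The paper's proof bypasses all of this and never mentions the \'etale envelope: a morphism $X_\red\to Y_\red$ over $S$ is identified with its graph, an open subset $\Gamma\subseteq|X\times_S Y|$ such that $\Gamma_\red\to X_\red$ is universally injective, surjective and proper (equivalently an isomorphism, using that $Y\to S$ is unramified). Open subsets whose two pull-backs to $S''$ agree descend along the universally submersive $g$, and each of the three listed properties of $\Gamma_\red\to X_\red$ can be checked after the surjective, universally submersive base change $S'\to S$. No finiteness hypotheses and no effective descent theorem are needed. If you want to retain your viewpoint, note that a section of the \'etale morphism $E_{Y/S}\times_S X\to X$ is itself an open immersion, so descending such sections is once more just descending open subsets---at which point you have reproduced the paper's argument with an unnecessary detour.
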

\begin{proof}
A morphism $\map{f}{X_\red}{Y_\red}$ corresponds to an open subspace
$\Gamma\subseteq X_\red\times_S Y_\red$ such that the projection $\Gamma\to
X_\red$ is an isomorphism. Equivalently, since $Y\to S$ is unramified, an open
subset $\Gamma\subseteq |X\times_S Y|$ corresponds to a morphism $X_\red\to
Y_\red$ if and only if $\Gamma_\red\to X_\red$ is universally injective,
surjective and proper. As $g$ is surjective, it follows that
$\Hom_S(X_\red,Y_\red)\to\Hom_{S'}(X'_\red,Y'_\red)$ is injective.

Now if $\Gamma'\subseteq |X'\times_{S'} Y'|$ is an open subset such that
$\pi_1^{-1}\Gamma'=\pi_2^{-1}\Gamma'$ as subsets of $|X''\times_{S''} Y''|$,
then $\Gamma'$ is the pull-back of an open subset $\Gamma\subseteq |X\times_S
Y|$ since $g$ is universally submersive. If in addition $\Gamma'$ corresponds
to a morphism $X'_\red\to Y'_\red$, then $\Gamma'_\red\to X'_\red$ is
universally injective, surjective and proper. As $g$ is universally submersive,
it follows that $\Gamma_\red\to X_\red$ also is universally injective,
surjective and proper. Thus $\Gamma$ corresponds to a morphism $X_\red\to
Y_\red$ lifting $X'_\red\to Y'_\red$.
\end{proof}

\begin{theorem}[Effective descent]
Let $\map{g}{S'}{S}$ be a quasi-compact and quasi-separated universally
subtrusive morphism of algebraic spaces. Let $X'\to S'$ be an unramified
morphism of constructible finite type (e.g., of finite presentation)
of algebraic
spaces equipped with a ``reduced descent datum'' relative to $S'\to S$, i.e.,
an isomorphism $\map{\theta}{(\pi_1^*X')_\red}{(\pi_2^*X')_\red}$ satisfying
the usual cocycle condition after passing to reductions.
Then there is a unique unramified morphism $X\to S$ of constructible finite
type
and a schematically dominant morphism $X'\to X$ such that $X'\to X\times_S S'$
is a nil-immersion.
\end{theorem}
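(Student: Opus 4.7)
The plan is to transport the reduced descent datum on $X'$ to a \emph{genuine} descent datum on the \etale{} envelope and to apply effective descent there. Start by forming $\map{e'}{E_{X'/S'}}{S'}$, which is \etale{} and of finite presentation by~\ref{TI:qs,finite-pres}, together with its canonical closed immersion $\injmap{i'}{X'}{E_{X'/S'}}$ and open immersion $\map{j'}{S'}{E_{X'/S'}}$. Base-change compatibility~\ref{TI:base-change} together with nil-invariance from~\ref{TI:closed-pullback} give canonical isomorphisms $\pi_k^{*}E_{X'/S'}\iso E_{\pi_k^{*}X'/S''}\iso E_{(\pi_k^{*}X')_\red/S''}$ for $k=1,2$. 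Applying the envelope functor to $\theta$ therefore produces an isomorphism $\map{\widetilde{\theta}}{\pi_1^{*}E_{X'/S'}}{\pi_2^{*}E_{X'/S'}}$; the cocycle condition for $\theta$ yields the cocycle condition for $\widetilde{\theta}$ by naturality of the above identifications. The same identifications intertwine the canonical sections $j_{f'}$, so $\widetilde{\theta}$ sends $\pi_1^{*}j'$ to $\pi_2^{*}j'$.

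Next, apply the effective descent result for finitely presented \etale{} morphisms along quasi-compact, quasi-separated, universally subtrusive morphisms from~\cite{rydh_submersion_and_descent} to $(E_{X'/S'},\widetilde{\theta})$. This yields an \etale{} finitely presented morphism $\map{e}{E}{S}$ with a compatible isomorphism $E\times_S S'\iso E_{X'/S'}$. Since $\widetilde{\theta}$ preserves $j'$, the latter descends to an open immersion $\map{j}{S}{E}$. Now define $X\inj E$ to be the scheme-theoretic image of the composition $X'\inj E_{X'/S'}\to E$, which is well-defined because that composition is quasi-compact (closed immersion followed by the base change of $g$). Then $X\to S$ is unramified as a closed subspace of the \etale{} morphism $e$, $X'\to X$ is schematically dominant by construction, and $X$ is of constructible finite type by the limit arguments of Appendix~\ref{A:constructible} applied to $X'$.

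It remains to check that $X'\to X\times_S S'$ is a nil-immersion. Flat base change along the \etale{} morphism $E\to S$ commutes with scheme-theoretic image of quasi-compact quasi-separated morphisms, so $X\times_S S'$ is the scheme-theoretic image of $\pi_1^{*}X'$ under the projection $E\times_S S''\to E_{X'/S'}=E\times_S S'$ induced by $\pi_2$. One computes that the set-theoretic image of $\pi_2^{*}X'$ under this projection is exactly $X'$; the descent datum equality $(\pi_1^{*}X')_\red=(\pi_2^{*}X')_\red$ inside $E\times_S S''$ then forces the set-theoretic image of $\pi_1^{*}X'$ to be $X'$ as well, so $X\times_S S'$ and $X'$ share the same underlying space. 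The induced map $X'\inj X\times_S S'$ is therefore a bijective closed immersion, i.e., a nil-immersion. Uniqueness of $X$ follows from the schematic dominance of $X'\to X$ together with the uniqueness of $E$: any competitor $Y\to S$ produces $E_{Y/S}\iso E$ via the uniqueness part of Theorem~\pref{T:main-theorem}, and then $Y$ is forced to equal the scheme-theoretic image of $X'$ inside $E$.

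The main obstacle is the last paragraph: rigorously verifying that the scheme-theoretic image under the base change $E\to S$ produces a nil-thickening of $X'$ rather than something larger. This requires combining flat base change for scheme-theoretic images with a careful set-theoretic argument that uses the universally subtrusive property of $\pi_2\colon S''\to S'$ (inherited from $g$) to transport the set-theoretic equality coming from $\theta$ back to $S'$; the nilpotent discrepancy between $X'$ and $X\times_S S'$ is precisely what is invisible to the reduced descent datum.
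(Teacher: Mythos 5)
Your proof follows essentially the same route as the paper's: transport the reduced descent datum to the \etale{} envelope via base-change compatibility and nil-invariance of $E_{-/S''}$, descend $E_{X'/S'}$ to an \etale{} finitely presented $E\to S$ using the effective descent theorem of \cite{rydh_submersion_and_descent}, and obtain $X$ as the schematic image of $X'\inj E_{X'/S'}\to E$. The only divergences are cosmetic --- the paper first descends the underlying closed set $i'(X')$ using that $h$ is subtrusive and then endows it with the schematic-image structure, whereas you take the scheme-theoretic image directly and recover its underlying set by flat base change --- though your appeal to ``the limit arguments of Appendix~\ref{A:constructible}'' for constructible finite type should instead invoke the subtrusiveness of $h$ to descend constructibility of $i'(X')$ to $X\subseteq E$, exactly as the paper does.
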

\begin{proof}
Let $X''_i=\pi_i^{*}X'$ for $i=1,2$ so that $X'':=(X''_1)_\red\iso
(X''_2)_\red$.  Consider the \etale{} envelopes $E_{X'/S'}$, $E_{X''/S''}$ and
$E_{X''_i/S''}$.  The nil-immersions $X''\inj X''_i$ induce natural
isomorphisms $E_{X''_i/S''}\to E_{X''/S''}$. As the \etale{} envelope commutes
with pull-back, there is a canonical isomorphism $E_{X''/S''}\iso
\pi_1^{*}E_{X'/S'}\iso \pi_2^{*}E_{X'/S'}$ which equips $E_{X'/S'}$ with
a descent datum.

The morphism $E_{X'/S'}\to S'$ is \etale{} and of finite presentation. Thus, it
descends to a morphism $E\to S$ which is \etale{} and of finite
presentation~\cite[Thm.~5.17]{rydh_submersion_and_descent}. The induced
morphism
$\map{h}{E_{X'/S'}}{E}$ is a pull-back of $g$ and thus universally subtrusive.
As $h$ is surjective and $\pi_1^{-1}(i'(X'))=\pi_2^{-1}(i'(X'))$ as sets, there
is a unique subset $X\subseteq E$ such that $h^{-1}(X)=i'(X')$. Since $h$ is
subtrusive and $i'(X')\subseteq E_{X'/S'}$ is closed and constructible, it
follows that $X$ is closed and constructible. We consider the set $X$ as a
closed subspace of $E$ by taking the ``schematic image'' of $X'\inj
E_{X'/S'}\to E$. Then $X\to S$ satisfies the conditions of the theorem.
\end{proof}

\newcommand{\catUnramConst}{\mathbf{Unr}_{\mathrm{cons}}}

\begin{corollary}
Let $\catUnramConst(S)$ be the category of unramified morphisms
$X\to S$ of constructible finite type with $X$ reduced and let $\catUnramConst(S'\to S)$ be the
category of unramified morphisms $X'\to S'$, of constructible finite type, equipped with a
reduced descent datum and with $X'$ reduced. There is a natural functor
$\catUnramConst(S)\to \catUnramConst(S'\to S)$ taking $X\to S$ to $(X\times_S
S')_\red\to S'$ and the induced descent datum. This functor is an equivalence
of categories.
\end{corollary}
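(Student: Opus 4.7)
The plan is to deduce this equivalence directly from the two preceding results in the Applications section: full faithfulness will come from the Descent Proposition, and essential surjectivity will come from the Effective Descent Theorem.

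For \emph{full faithfulness}, let $X, Y \in \catUnramConst(S)$; since both are reduced by hypothesis, $\Hom_S(X,Y) = \Hom_S(X_\red, Y_\red)$. Write $X' := (X \times_S S')_\red$ and $Y' := (Y \times_S S')_\red$: these are the underlying objects of $\Phi(X), \Phi(Y) \in \catUnramConst(S' \to S)$, each carrying the canonical reduced descent datum induced by the base-change identifications. A morphism $\Phi(X) \to \Phi(Y)$ is, by definition, a morphism $X' \to Y'$ whose two pullbacks to $S''$ agree under the canonical descent identifications of $(X \times_S S'')_\red$ and $(Y \times_S S'')_\red$; but this is precisely the equalizer condition appearing in the Descent Proposition. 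Hence the Descent Proposition identifies $\Hom_{\catUnramConst(S' \to S)}\bigl(\Phi(X), \Phi(Y)\bigr)$ with $\Hom_S(X,Y)$.

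For \emph{essential surjectivity}, let $(X', \theta) \in \catUnramConst(S' \to S)$. The Effective Descent Theorem produces an unramified morphism $X \to S$ of constructible finite type, a schematically dominant morphism $h \colon X' \to X$, and a nil-immersion $X' \inj X \times_S S'$. Since $h$ is schematically dominant with reduced source, the injection $\sO_X \inj h_*\sO_{X'}$ embeds $\sO_X$ into a ring with no nilpotents, so $X$ is reduced and thus lies in $\catUnramConst(S)$. The nil-immersion identifies $(X \times_S S')_\red$ with $X'$, so the underlying object of $\Phi(X)$ is $X'$ itself. It remains to check that the canonical descent datum carried by $\Phi(X)$ coincides with $\theta$. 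This is essentially built into the construction in the proof of the Effective Descent Theorem: the descent datum on $E_{X'/S'}$ that was used to descend $E_{X'/S'}$ to $E$, and hence to produce $X$, was itself obtained from $\theta$ via the base-change formula $\pi_i^* E_{X'/S'} \iso E_{X''_i/S''}$ from \ref{TI:base-change}, combined with the isomorphisms $E_{X''_i/S''} \iso E_{X''/S''}$ induced by the nil-immersions $X'' \inj X''_i$ coming from $\theta$. Restricting this datum along the closed immersion $X \inj E$ and pulling back to $S'$ returns $\theta$ after passing to reductions.

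The main obstacle is precisely this last compatibility of descent data; everything else is immediate from the two preceding results. The required bookkeeping is conceptually forced by the base-change naturality of the \'etale envelope, which is what allowed $\theta$ to be transferred to a descent datum on $E_{X'/S'}$ in the first place, so the argument is essentially a matter of tracing through the definitions.
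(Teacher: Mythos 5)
The paper states this corollary without proof, as an immediate consequence of the preceding Descent Proposition and Effective Descent Theorem, and your argument is precisely that intended derivation: full faithfulness from the exact sequence of the Proposition (the equalizer condition being exactly compatibility with the canonical descent data on reduced objects), and essential surjectivity from the Theorem, with the correct observations that schematic dominance forces $X$ to be reduced and that the induced descent datum recovers $\theta$ because the construction of $E$ was built from $\theta$ via the base-change compatibility of the \'etale envelope. Your proof is correct and takes the same approach the paper intends.
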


\end{section}

\appendix

\begin{section}{Algebraic spaces and stacks}\label{A:stacks}
A sheaf of sets $F$ on the category of schemes $\Sch$ with the \etale{}
topology is an \emph{algebraic space} if there exists a scheme $X$ and a
morphism $X\to F$ which is represented by surjective \etale{} morphisms
of schemes~\cite[D\'ef.~5.7.1]{raynaud-gruson}, i.e., for any scheme $T$
and morphism $T\to F$, the fiber product $X\times_F T$ is a scheme and
$X\times_F T\to T$ is surjective and \etale{}.

A \emph{stack} is a category fibered in groupoids over $\Sch$ with the
\etale{} topology satisfying the usual sheaf condition~\cite{laumon}.
A morphism $\map{f}{X}{Y}$ of stacks is \emph{representable} if for any scheme
$T$ and morphism $T\to Y$, the $2$-fiber product $X\times_Y T$ is an algebraic
space. A stack $X$ is \emph{algebraic} if there exists a smooth presentation,
i.e., a smooth, surjective and representable morphism $U\to X$ where $U$ is a
scheme. A stack $X$ is \emph{Deligne--Mumford} if there exists an \etale{}
presentation. A stack $X$ is Deligne--Mumford if and only if $X$ is algebraic
and the diagonal $\Delta_X$ is unramified. A morphism $\map{f}{X}{Y}$ of stacks
is \emph{quasi-separated} if the diagonal $\Delta_{X/Y}$ is quasi-compact and
quasi-separated, i.e., if both $\Delta_{X/Y}$ and its diagonal are
quasi-compact.

\begin{remark}[Quasi-separatedness]
We do not require that algebraic spaces and stacks are quasi-separated nor that
the diagonal of an algebraic stack is separated. The queasy reader may assume
that the diagonals of all stacks and algebraic spaces are separated and
quasi-compact (as in~\cite{knutson_alg_spaces,laumon}) but this is not
necessary in this paper. The reader should however note that unless we work
with noetherian stacks or finitely presented unramified morphisms, stacks and
algebraic spaces with non-quasi-compact diagonals will appear.

The diagonal of a (not necessarily quasi-separated) algebraic space is
representable by schemes. This follows by effective fppf-descent of
monomorphisms which are locally of finite type. Indeed, more generally the
class of locally quasi-finite and separated morphisms is an effective class in
the fppf-topology (cf.\ \cite[App.]{murre}, \cite[Exp.~X, Lem.~5.4]{sga3}
or~\cite[pf.\ of 5.7.2]{raynaud-gruson}).

The diagonal of an algebraic stack $X$ is representable. This follows
by~\cite[pf.\ of Prop.~4.3.1]{laumon} as~\cite[Cor.~1.6.3]{laumon} generalizes
to arbitrary algebraic spaces.

The characterization of Deligne--Mumford stacks as algebraic stacks with
unramified diagonal is valid for arbitrary algebraic stacks. Indeed,
the proof of~\cite[Thm.~8.1]{laumon} does not use that the diagonal
is separated and quasi-compact.
\end{remark}
\end{section}


\begin{section}{Unramified and \etale{} morphisms of stacks}\label{A:unramified-and-etale}
We use the modern terminology of unramified
morphisms~\cite{raynaud_hensel_rings}: an unramified morphism of schemes is a
formally unramified morphism which is \emph{locally of finite type} (and not
necessarily locally of finite presentation). Equivalently, an unramified
morphism is a morphism locally of finite type such that the diagonal is an open
immersion~\cite[17.4.1.2]{egaIV}. Recall that an \etale{} morphism of schemes
is a formally \etale{} morphism which is locally of finite presentation or
equivalently, a flat and unramified morphism which is locally of finite
presentation~\cite[17.6.2]{egaIV}. These definitions generalize to include
\emph{non-representable} morphisms as follows:

\begin{definition}
A morphism $\map{f}{X}{Y}$ of algebraic stacks is \emph{unramified} if $f$ is
locally of finite type and the diagonal $\Delta_f$ is \etale{}. A morphism
$\map{f}{X}{Y}$ of algebraic stacks is \emph{\etale} if $f$ is locally of
finite presentation, flat and unramified.
\end{definition}

For representable $f$ this definition of unramified agrees with the usual since
an \etale{} monomorphism is an open immersion~\cite[Thm.~17.9.1]{egaIV}.
The notions of unramified and \etale{} are fpqc-local on the target and
\etale-local on the source~\cite[2.2.11~(iv), 2.7.1, 17.7.3, 17.7.7]{egaIV}.

\begin{proposition}\label{P:etale=smooth+unramified}
Let $\map{f}{X}{Y}$ be a morphism of algebraic stacks. The following are
equivalent:
\begin{enumerate}
\item $f$ is \etale{}.
\item $f$ is smooth and unramified.
\end{enumerate}
\end{proposition}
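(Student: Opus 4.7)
The plan is to prove each implication separately. The direction (ii) $\Rightarrow$ (i) is essentially a tautology: by definition a smooth morphism is locally of finite presentation and flat, so combining smoothness with the unramified property yields ``locally of finite presentation, flat, and unramified'', which is exactly the definition of \etale{}.

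For (i) $\Rightarrow$ (ii), an \etale{} morphism is unramified by definition, so the only content is to deduce smoothness. First I would reduce to the case that $Y$ is a scheme: both properties can be verified smooth-locally on the target, so pulling back along a smooth presentation $V \to Y$ with $V$ a scheme reduces us to this situation. Once $Y$ is a scheme, $\Delta_Y$ is an immersion, so the canonical map $X \times_Y X \to X \times X$ is a monomorphism locally of finite type, hence unramified; composed with $\Delta_f$, which is \etale{} by hypothesis, this shows the absolute diagonal $\Delta_X$ is unramified, whence $X$ is Deligne--Mumford.

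Having reduced to $Y$ a scheme and $X$ a Deligne--Mumford stack, I would choose an \etale{} presentation $\map{p}{U}{X}$ with $U$ a scheme. The composition $f \circ p \colon U \to Y$ is then an \etale{} morphism of schemes by stability under composition, and hence smooth by the classical equivalence for morphisms of schemes. Since $p$ is a smooth surjection, smoothness of $f \circ p$ implies smoothness of $f$. The main obstacle is handling the non-representable case, which is taken care of by this smooth-local reduction on the target combined with the characterization of Deligne--Mumford stacks via unramified diagonal; once in the scheme case, the equivalence is classical.
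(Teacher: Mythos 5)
Your argument is correct, but it takes a genuinely different route from the paper's. The paper does not reduce to the case of a scheme target and does not invoke the Deligne--Mumford property of $X$ at all: it takes an arbitrary smooth presentation $U\to X$, notes that $U\times_X U\to U\times_Y U$ is \etale{} as a base change of the \etale{} diagonal $\Delta_f$, deduces that the projections $U\times_Y U\to U$ are smooth at every point in the image of $U\times_X U$ (since $U\times_X U\to U$ is smooth and factors through this \etale{} map), and then concludes that the flat, locally finitely presented morphism $U\to Y$ is smooth by flat descent, whence $X\to Y$ is smooth. You instead first reduce to $Y$ a scheme, use the unramifiedness of $f$ to see that $\Delta_X$ is unramified and hence that $X$ is Deligne--Mumford, extract an \etale{} presentation $\map{p}{U}{X}$, apply the classical scheme-level equivalence to $f\circ p$, and descend smoothness along the smooth surjection $p$. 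What your route buys is that the only nontrivial computation is the classical statement for schemes; what it costs is reliance on the existence of \etale{} presentations for stacks with unramified diagonal (Laumon--Moret-Bailly, Thm.~8.1, quoted in Appendix~A), a heavier input than the paper's flat-descent argument --- though not a circular one, since that theorem produces a flat, unramified, locally finitely presented atlas, which is \etale{} by the definition adopted here rather than by the proposition being proved. Both proofs are valid.
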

\begin{proof}
As a smooth morphism is flat and locally of finite presentation (ii) implies
(i). To see that (i) implies (ii), take a smooth presentation $U\to X$.
If $f$ is \etale{} then $U\times_X U\to U\times_Y U$ is \etale{}. Thus, the
projections $U\times_Y U\to U$ are smooth at the points in the image of
$U\times_X U$. Since $U\times_X U\to U$ is surjective and $U\to Y$ is flat,
it follows that $U\to Y$ is smooth by flat descent and, a fortiori, that
$X\to Y$ is smooth.
\end{proof}

\begin{proposition}\label{P:unramified-char}
Let $\map{f}{X}{Y}$ be a morphism of algebraic stacks. The following are
equivalent:
\begin{enumerate}
\item $f$ is unramified.
\item $f$ is locally of finite type and for every point
$\Spec(k)\to Y$ we have that $X\times_Y \Spec(k)\to \Spec(k)$ is unramified.
\item $f$ is locally of finite type and for every point
$\Spec(k)\to Y$ we have that $X\times_Y \Spec(k)$ is geometrically reduced,
Deligne--Mumford and discrete.
\end{enumerate}
\end{proposition}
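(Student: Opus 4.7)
The plan is to establish the three equivalences in the order (i) $\Rightarrow$ (ii), (ii) $\Rightarrow$ (i), (ii) $\Leftrightarrow$ (iii). The overall strategy is to reduce each question to one about morphisms of schemes by choosing a smooth presentation $V \to Y$ with $V$ a scheme and then a smooth presentation $U \to X \times_Y V$ with $U$ a scheme; this reduction is legitimate because, as noted just before the statement, being locally of finite type and being unramified are both smooth-local on the source and fpqc-local on the target.

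For (i) $\Rightarrow$ (ii), I rely on base-change stability: both ``locally of finite type'' and ``\'etale'' are stable under arbitrary pullback, and $\Delta_{f_k} = \Delta_f \times_Y \Spec(k)$, so if $f$ is unramified then so is every $f_k$. For (ii) $\Rightarrow$ (i), after the smooth-presentation reduction the claim becomes: a morphism of schemes locally of finite type whose geometric fibers are all unramified is itself unramified. This is classical and follows from \cite[Cor.~17.4.14]{egaIV} applied to the quasi-coherent module of differentials $\Omega^1_{U/V}$.

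For (ii) $\Leftrightarrow$ (iii), the forward direction begins by noting that an unramified $X_k \to \Spec(k)$ has \'etale, hence unramified, diagonal, so $X_k$ is Deligne--Mumford by the remark at the end of Appendix~\ref{A:stacks}. Taking an \'etale presentation $U_k \to X_k$, the composite $U_k \to \Spec(k)$ is an unramified morphism of schemes, hence a disjoint union of spectra of finite separable field extensions of $k$; geometric reducedness and discreteness of $X_k$ then descend from the corresponding properties of $U_k$. Conversely, given (iii), an \'etale presentation $U_k \to X_k$ has $U_k$ a disjoint union of spectra of finite separable extensions, so $U_k \to \Spec(k)$ is unramified as a morphism of schemes; \'etaleness of $\Delta_{f_k}$ then follows by descent from the observation that its pullback along the smooth surjection $U_k \times_k U_k \to X_k \times_k X_k$ is a morphism between \'etale $k$-schemes, and any such morphism is automatically \'etale.

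The step I expect to be the main obstacle is this last flatness verification inside (iii) $\Rightarrow$ (ii): being Deligne--Mumford only guarantees that $\Delta_{f_k}$ is unramified, so upgrading to \'etale as required by the definition of unramified used in this paper has to exploit the fact that the base is a field (so that ``\'etale over $k$'' is preserved by arbitrary morphisms between $k$-schemes). Once that point is isolated, everything else is routine given the smooth and \'etale local reductions.
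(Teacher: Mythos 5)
Your treatment of the fibre statements --- (i)$\Rightarrow$(ii) by base change of the diagonal, and the equivalence (ii)$\Leftrightarrow$(iii) by working over $\Spec(k)$, taking an \etale{} presentation $U_k\to X_k$ and using that a $k$-morphism between \etale{} $k$-schemes is \etale{} --- is sound and close in spirit to what the paper does. The genuine gap is in your step (ii)$\Rightarrow$(i). You justify the reduction to a morphism of schemes $U\to V$ by asserting that ``unramified'' is smooth-local on the source; the paper only claims (and it is only true) that it is \emph{\etale{}}-local on the source. If $U\to X\times_Y V$ is a genuinely smooth (non-\etale{}) presentation, the reduction fails in both directions: the geometric fibres of $U\to V$ are smooth over the fibres of $f$ and hence typically of positive dimension, so the hypothesis of~\cite[Cor.~17.4.14]{egaIV} is not available for $U\to V$; and even when $U\to V$ happens to be unramified one cannot conclude that $X\times_Y V\to V$ is (consider $\Spec(k)\to B\mathbb{G}_m\to\Spec(k)$, where the composite is an isomorphism but $B\mathbb{G}_m\to\Spec(k)$ is not unramified). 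To use an \emph{\etale{}} presentation of $X\times_Y V$ you must first know that $X\times_Y V$ is Deligne--Mumford, i.e.\ that $\Delta_f$ is unramified --- and that is precisely the global content of the implication you are trying to prove, so it cannot be assumed.

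This is exactly the point the paper's ordering (ii)$\Rightarrow$(iii)$\Rightarrow$(i) is designed to handle: from (iii) one first observes that the fibres of the \emph{representable} morphism $\Delta_f$ are unramified, applies the already-established representable case to $\Delta_f$ to conclude that $f$ is Deligne--Mumford, and only then chooses an \etale{} presentation $X'\to X\times_Y Y'$ and applies the representable case to $X'\to Y'$, finishing with the \etale{}-locality on the source and fpqc-locality on the target. Your proposal never runs this diagonal argument, and your closing paragraph locates the main difficulty in the wrong place: the flatness of $\Delta_{f_k}$ over a field is indeed easy (as you note), but the real obstacle is promoting the fibrewise information to unramifiedness of $\Delta_f$ over $Y$ itself. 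Inserting the paper's diagonal step at the start of your (ii)$\Rightarrow$(i) (using your own (ii)$\Rightarrow$(iii) to see that each $X_k$ is Deligne--Mumford, hence that the fibres of $\Delta_f$ are unramified) would repair the argument.
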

\begin{proof}
Clearly (i)$\implies$(ii). If $f$ is representable, then it is well-known that
(ii)$\implies$(iii)$\implies$(i)~\cite[17.4.1.2]{egaIV}.
For general $f$, to see that (ii)$\implies$(iii) we can assume that
$Y=\Spec(k)$ so that $X$ is Deligne--Mumford. As both (ii) and (iii) are
\etale{}-local on $X$ we can also assume that $X$ is a scheme so that $f$ is
representable and (ii)$\implies$(iii) by the representable case.

If (iii) holds, then the fibers of the diagonal are unramified and hence
$\Delta_f$ is unramified, i.e., $f$ is Deligne--Mumford. Let $Y'\to
Y$ be a smooth presentation and let $X'\to X\times_Y Y'$ be an \etale{}
presentation. Then the representable morphism $X'\to Y'$ also satisfies
condition (iii) and hence is unramified. This shows that (iii)$\implies$(i).
\end{proof}

In the remainder of this section we will show that the definitions of
unramified and \etale{} given above have a more standard formal description.

\begin{definition}
Let $S$ be a stack and let $X$ and $Y$ be stacks over $S$. We let
$\catHom_S(X,Y)$ be the groupoid with objects $2$-commutative diagrams
$$\xymatrix{X\ar[r]^f\rrlowertwocell{<2>\tau} & Y\ar[r] & S}$$
and morphisms $\map{\varphi}{(f_1,\tau_1)}{(f_2,\tau_2)}$, $2$-commutative
diagrams
$$\vcenter{\xymatrix{X\rtwocell<3>^{f_1}_{f_2}{\varphi}\rrlowertwocell<-12>{<3>\tau_2}
 & Y\ar[r] & S}} \quad \text{such that $\tau_2\circ\varphi=\tau_1$.}$$
\end{definition}

We note that if $Y\to S$ is representable, then the groupoid $\catHom_S(X,Y)$
is equivalent to a set.

\begin{definition}
Let $\map{f}{X}{Y}$ be a morphism of stacks. We say that $f$ is \emph{formally
unramified} (resp.\ \emph{formally Deligne--Mumford}, resp.\ 
\emph{formally smooth}, resp.\ \emph{formally \etale{}}) if for every
$Y$-scheme $T$ and every closed subscheme $T_0\inj T$ defined by a nilpotent
ideal sheaf the functor
$$\catHom_Y(T,X)\to \catHom_Y(T_0,X)$$
is fully faithful (resp.\ faithful, resp.\ essentially surjective, resp.\ an
equivalence of categories).
\end{definition}

\begin{remark}
The functor $\catHom_Y(T,X)\to \catHom_Y(T_0,X)$ is essentially surjective if
and only if for every $2$-commutative diagram
$$\xymatrix{T_0\ar[r]\ar@{(->}[d]\drtwocell<\omit>{\tau} & X\ar[d]\\
T\ar[r] & Y}$$
there exists a morphism $T\to X$ and a $2$-commutative diagram
$$\vcenter{\xymatrix{T_0\ar[r]\ar@{(->}[d]\rtwocell<\omit>{<1.5>\varphi} & X\ar[d]\\
T\ar[r]\ar[ur]\rtwocell<\omit>{<-2>\psi} & Y}}
\quad \text{such that $\tau=\psi\circ\varphi$.}$$
If $\map{f}{X}{Y}$ is locally of finite presentation, then it can be shown that
it is enough to consider strictly henselian $T$ and closed subschemes $T_0\inj
T$ defined by a square-zero ideal, cf.~\cite[Prop.~4.15 (ii)]{laumon}.
%
\end{remark}

Formally unramified (resp.\ \dots) morphisms are stable under base change,
products and composition, cf.~\cite[Prop.~17.1.3]{egaIV}.

\begin{proposition}\label{P:form-unram/etale-and-diagonal}
Let $\map{f}{X}{Y}$ be a morphism of stacks. Then $f$ is formally unramified
(resp.\ formally Deligne--Mumford) if and only if the diagonal $\Delta_f$
is formally \etale{} (resp.\ formally unramified).
\end{proposition}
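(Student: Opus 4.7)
The plan is to reduce the statement to a direct translation between lifting properties for $f$ (expressed via the $2$-categorical $\catHom_Y$-groupoids) and lifting properties for the representable morphism $\Delta_f$ (expressed via honest sets of sections). The bridge is the standard observation that for any $Y$-scheme $T$, two objects $f_1,f_2\in\catHom_Y(T,X)$ together with a $2$-isomorphism $\tau\colon f\circ f_1\Rightarrow f\circ f_2$ define a morphism $(f_1,f_2,\tau)\colon T\to X\times_Y X$, and under this assignment the set
\[
\mathrm{Hom}_{\catHom_Y(T,X)}\bigl(f_1,f_2\bigr)
\]
is in natural bijection with the set of lifts of $(f_1,f_2,\tau)$ along $\Delta_f$. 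This is where the representability of $\Delta_f$ enters: its $\catHom$-groupoids are (equivalent to) sets, so the lifting property of $\Delta_f$ is a purely set-theoretic statement.

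First I would spell out this bijection: a lift $T\to X$ of $(f_1,f_2,\tau)$ is exactly a section of $X\to X\times_Y X$ over $T$, and the two induced composites with the projections recover $f_1$ and $f_2$ together with a canonical $2$-isomorphism between them whose image in $Y$ is $\tau$. Conversely, any $2$-isomorphism $f_1\Rightarrow f_2$ over $Y$ compatible with $\tau$ yields such a section. This is completely formal from the universal property of the $2$-fiber product $X\times_Y X$.

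Next I would apply this bijection to a square-zero (or more generally nilpotent) closed immersion $T_0\inj T$. Faithfulness of $\catHom_Y(T,X)\to\catHom_Y(T_0,X)$ translates exactly into injectivity of the restriction map on sections of $\Delta_f$, i.e., into $\Delta_f$ being formally unramified. This gives ``$f$ formally Deligne--Mumford $\iff$ $\Delta_f$ formally unramified''. To get the unramified version I would observe that fullness of $\catHom_Y(T,X)\to\catHom_Y(T_0,X)$ translates into essential surjectivity of the lifts of $\Delta_f$ (any $2$-isomorphism over $T_0$ lifts to some $2$-isomorphism over $T$, hence any $T$-point of $X\times_Y X$ equipped with a $T_0$-lift through $\Delta_f$ admits a $T$-lift), while faithfulness still corresponds to formal unramifiedness of $\Delta_f$. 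Combined, full faithfulness of $\catHom_Y(T,X)\to\catHom_Y(T_0,X)$ is equivalent to $\Delta_f$ being formally \'etale, proving ``$f$ formally unramified $\iff$ $\Delta_f$ formally \'etale''.

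\textbf{Main obstacle.} The main subtlety, and the only thing to be careful about, is the direction fullness $\Rightarrow$ essential surjectivity for lifts through $\Delta_f$: one must check that an arbitrary $T$-point of $X\times_Y X$ together with a lift over $T_0$ through $\Delta_f$ really arises from a pair $(f_1,f_2,\tau)$ where both $f_1,f_2$ extend $f_1|_{T_0},f_2|_{T_0}$, so that fullness (not essential surjectivity!) of $\catHom_Y(T,X)\to\catHom_Y(T_0,X)$ is the relevant input. This is automatic because $f_1$ and $f_2$ are already given on all of $T$ as part of the datum of a $T$-point of $X\times_Y X$; only the $2$-isomorphism between them needs to be extended. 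Once this bookkeeping is straight, the proposition follows by unwinding definitions.
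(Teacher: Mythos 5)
Your proposal is correct and follows essentially the same route as the paper: both rest on the natural bijection between the set of $2$-morphisms $(f_1,\tau_1)\to(f_2,\tau_2)$ in $\catHom_Y(T,X)$ and the set of lifts of the induced map $T\to X\times_Y X$ through $\Delta_f$ (a set because $\Delta_f$ is representable), under which faithfulness becomes injectivity and full faithfulness becomes bijectivity of the restriction to $T_0$. The subtlety you flag --- that an arbitrary $T$-point of $X\times_Y X$ already supplies $f_1$ and $f_2$ on all of $T$, so only the $2$-isomorphism needs extending --- is exactly the point the paper handles by noting that any $F\colon T\to X\times_Y X$ arises from a (non-unique) pair of objects of $\catHom_Y(T,X)$.
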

\begin{proof}
Let $T$ be a $Y$-scheme and let $\injmap{j}{T_0}{T}$ be a closed subscheme
defined by a
nilpotent ideal. Let $(f_1,\tau_1)$ and $(f_2,\tau_2)$ be two objects of
$\catHom_{Y}(T,X)$. This determines a morphism
$\map{F=(f_1,\tau_2^{-1}\circ\tau_1,f_2)}{T}{X\times_Y X}$. Conversely, a
morphism $\map{F}{T}{X\times_Y X}$ gives rise to a (non-unique) pair
$(f_1,\tau_1),(f_2,\tau_2)$ of objects in $\catHom_{Y}(T,X)$ such that
$F=(f_1,\tau_2^{-1}\circ\tau_1,f_2)$.

Fix a pair of objects $(f_1,\tau_1),(f_2,\tau_2)$ and a morphism
$\map{F}{T}{X\times_Y X}$ as above. As the diagonal of $f$ is representable,
the groupoid $\catHom_{X\times_Y X}(T,X)$ is equivalent to the set
$\Hom_{X\times_Y X}(T,X):=\pi_0 \catHom_{X\times_Y X}(T,X)$.
There is a natural bijection between the set of $2$-morphisms
$\Hom(f_1,f_2)$ and the set $\Hom_{X\times_Y X}(T,X)$. Thus
$\Hom(f_1,f_2)\to \Hom(f_1\circ j,f_2\circ j)$ is bijective (resp.\ injective)
if and only if $\Hom_{X\times_Y X}(T,X)\to\Hom_{X\times_Y X}(T_0,X)$ is
bijective (resp.\ injective).
\end{proof}

\begin{corollary}
Let $\map{f}{X}{Y}$ and $\map{g}{Y}{Z}$ be two morphisms.
\begin{enumerate}
\item If $g\circ f$ is formally Deligne--Mumford then so is $f$.
\item If $g\circ f$ is formally unramified and $g$ is formally Deligne--Mumford,
then $f$ is formally unramified.
\end{enumerate}
\end{corollary}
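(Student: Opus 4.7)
Proof plan:

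The plan is to work directly from the Hom-groupoid definitions of \emph{formally unramified} and \emph{formally Deligne--Mumford}, together with Proposition~\pref{P:form-unram/etale-and-diagonal}. The key preliminary observation is that composition with $g$ yields a natural functor $\catHom_Y(T,X)\to \catHom_Z(T,X)$, sending $(h,\tau)$ to $(h,g\tau)$, which acts as the identity on underlying $2$-morphism data. In particular this functor is \emph{faithful} on morphisms: two $2$-morphisms that become equal in $\catHom_Z(T,X)$ have the same underlying $2$-morphism and hence are already equal in $\catHom_Y(T,X)$.

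For (i), I would apply this directly. Given two $2$-morphisms $\varphi_1,\varphi_2$ in $\catHom_Y(T,X)$ that restrict identically to a nilpotent closed subscheme $T_0\inj T$, their images in $\catHom_Z(T,X)$ also restrict identically to $T_0$. Since $g\circ f$ is formally Deligne--Mumford, the functor $\catHom_Z(T,X)\to \catHom_Z(T_0,X)$ is faithful, so $\varphi_1=\varphi_2$ in $\catHom_Z(T,X)$, hence in $\catHom_Y(T,X)$.

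For (ii), faithfulness of $\catHom_Y(T,X)\to \catHom_Y(T_0,X)$ follows from (i) since formally unramified implies formally Deligne--Mumford. The main content is fullness. Given $\chi_0\colon (h_1,\tau_1)|_{T_0}\to (h_2,\tau_2)|_{T_0}$ in $\catHom_Y(T_0,X)$, I would view it in $\catHom_Z(T_0,X)$ and use that $g\circ f$ is formally unramified (hence full) to lift it to a $2$-morphism $\tilde\chi\colon (h_1,g\tau_1)\to (h_2,g\tau_2)$ in $\catHom_Z(T,X)$. The question is whether $\tilde\chi$ lies in $\catHom_Y(T,X)$, i.e.\ whether the two $2$-isomorphisms $\alpha:=\tau_2\circ (f\tilde\chi)$ and $\beta:=\tau_1$ from $fh_1$ to the structure morphism $\pi_Y\colon T\to Y$ are equal. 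By construction $g\alpha=g\beta$, and $\alpha|_{T_0}=\beta|_{T_0}$ because $\chi_0$ is $Y$-compatible.

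Here the hypothesis that $g$ is formally Deligne--Mumford enters, via Proposition~\pref{P:form-unram/etale-and-diagonal} which says $\Delta_g$ is formally unramified. The $2$-isomorphisms $\alpha$ and $\beta$ determine two objects of $\catHom_{Y\times_Z Y}(T,Y)$ lifting the morphism $(fh_1,g\alpha,\pi_Y)\colon T\to Y\times_Z Y$ along $\Delta_g$, which coincide on $T_0$. By fullness of $\catHom_{Y\times_Z Y}(T,Y)\to\catHom_{Y\times_Z Y}(T_0,Y)$, the identity isomorphism on $T_0$ between them lifts to an isomorphism on $T$; unwinding the definition of morphisms in $\catHom_{Y\times_Z Y}(T,Y)$ forces $\alpha=\beta$, so $\tilde\chi$ is the desired lift in $\catHom_Y(T,X)$. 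The main obstacle is not conceptual but the careful bookkeeping of $2$-categorical compatibilities; once the two cancellation steps above are set up, the proof is formal.
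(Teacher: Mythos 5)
Your argument is correct, but it is not the route the paper takes: the corollary is stated without proof, immediately after the remark that formally unramified (resp.\ formally Deligne--Mumford) morphisms are stable under base change and composition and after Proposition~\pref{P:form-unram/etale-and-diagonal}, and the intended derivation is the classical diagonal/graph argument. For (ii) one factors $f=\pi_2\circ\Gamma_f$ where the graph $\map{\Gamma_f}{X}{X\times_Z Y}$ is a base change of $\map{\Delta_g}{Y}{Y\times_Z Y}$ --- formally unramified by the proposition since $g$ is formally Deligne--Mumford --- and $\map{\pi_2}{X\times_Z Y}{Y}$ is a base change of $g\circ f$; for (i) one writes $\Delta_{g\circ f}$ as $\Delta_f$ followed by $X\times_Y X\to X\times_Z X$ and uses that formal unramifiedness of representable morphisms cancels from the left. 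You instead verify the Hom-groupoid conditions directly: faithfulness for (i) via the faithful forgetful functor $\catHom_Y(T,X)\to\catHom_Z(T,X)$, and fullness for (ii) by first lifting the $2$-morphism over $Z$ using that $g\circ f$ is formally unramified, and then killing the discrepancy between $\alpha=\tau_2\circ(f\tilde\chi)$ and $\beta=\tau_1$ using formal unramifiedness of $\Delta_g$. Both approaches are valid and both ultimately rest on Proposition~\pref{P:form-unram/etale-and-diagonal}; the graph argument buys brevity and reuses stability under base change and composition, while your direct argument isolates exactly which part of each hypothesis is used (only faithfulness over $Z$ in (i), only injectivity of $\Hom_{Y\times_Z Y}(T,Y)\to\Hom_{Y\times_Z Y}(T_0,Y)$ in (ii)) at the cost of the $2$-categorical bookkeeping you acknowledge. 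One point worth making explicit in your last step: since $\Delta_g$ is representable, $\catHom_{Y\times_Z Y}(T,Y)$ is equivalent to a set, so the isomorphism over $T$ produced by full faithfulness identifies the two lifts in $\pi_0\catHom_{Y\times_Z Y}(T,Y)$, and under the natural bijection in the proof of the proposition this is precisely the equality $\alpha=\beta$; with that spelled out the proof is complete.
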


\begin{corollary}
Let $\map{f}{X}{Y}$ be a morphism of stacks.
\begin{enumerate}
\item $f$ is smooth if and only if $f$ is locally of finite presentation
and formally smooth.\label{CI:smooth}
\item $f$ is \etale{} if and only if $f$ is locally of finite presentation
and formally \etale{}.\label{CI:etale}
\item $f$ is unramified if and only if $f$ is locally of finite type and
formally unramified.\label{CI:unramified}
\item $f$ is Deligne--Mumford (i.e., $\Delta_f$ is unramified) if
and only if $f$ is formally Deligne--Mumford.\label{CI:D-M}
\end{enumerate}
\end{corollary}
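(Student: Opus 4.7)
The strategy is to reduce each of~(i)--(iv) to its representable analogue, which is classical~\cite[17.4.1, 17.5.2, 17.6.2]{egaIV}. There are two reduction techniques at play. The first, used for~(iii) and~(iv), is Proposition~\ref{P:form-unram/etale-and-diagonal}, which converts a formal property of $f$ into the corresponding formal property of the representable diagonal $\Delta_f$. The second, used for~(i) and~(ii), is descent along a smooth presentation $\map{p}{U}{X}$ with $U$ a scheme, which replaces $f$ by the representable composite $f\circ p$.

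I would treat~(iv) first. By Proposition~\ref{P:form-unram/etale-and-diagonal}, $f$ is formally Deligne--Mumford iff $\Delta_f$ is formally unramified. Since $\Delta_f$ is representable and automatically locally of finite presentation (as one checks on a smooth atlas of $X$), the representable case of~(iii) identifies formal unramifiedness of $\Delta_f$ with unramifiedness, which is by definition $f$ being Deligne--Mumford. For~(iii), the forward direction is immediate from Proposition~\ref{P:form-unram/etale-and-diagonal}, since $\Delta_f$ \etale{} implies formally \etale{}. Conversely, if $f$ is locally of finite type and formally unramified then $\Delta_f$ is formally \etale{}, and being representable and locally of finite presentation, the representable case of~(ii) promotes it to \etale{}, so $f$ is unramified.

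For~(i) and~(ii), I would fix a smooth presentation $\map{p}{U}{X}$ with $U$ a scheme and verify that each of the conditions `locally of finite presentation', `flat', `smooth', `\etale{}' and their formal counterparts holds for $f$ iff it holds for $f\circ p$. One direction uses that $p$ is smooth, hence formally smooth, so that composition with $p$ preserves formal smoothness, formal \etale{}ness and formal unramifiedness. The converse, in the formal cases, requires lifting an infinitesimal deformation $T_0 \to X$ over $Y$ to $T \to X$: after an \etale{} refinement $T_0'\to T_0$ the map lifts to $T_0'\to U$ using smoothness of $p$, then across $T_0'\inj T'$ by formal smoothness of $f\circ p$, and the resulting $T'\to U\to X$ descends to $T\to X$ because $X$ is a stack in the \etale{} topology. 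Combined with the classical equivalences on schemes~\cite[17.5.2, 17.6.2]{egaIV}, this yields~(i) and~(ii).

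The principal technical point is the last \etale{}-local descent of infinitesimal liftings, which is essentially the argument of~\cite[Prop.~4.15]{laumon} and which adapts to our setting without any quasi-separatedness hypothesis on $X$; the remaining ingredients are bookkeeping reductions to the representable case covered by EGA.
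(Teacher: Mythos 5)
Your treatment of (iii) and (iv) is exactly the paper's argument: reduce to the representable diagonal via Proposition~\pref{P:form-unram/etale-and-diagonal}. One correction there: the relative diagonal $\Delta_f$ of an arbitrary morphism of algebraic stacks is \emph{not} automatically locally of finite presentation --- already for schemes, the diagonal of $\Spec(k[x_1,x_2,\dots])\to\Spec(k)$ is a closed immersion cut out by a non-finitely-generated ideal. It is only locally of finite type, and that is fortunately all the representable case of (iii) requires for your proof of (iv). (When $f$ itself is locally of finite type, as in the converse direction of (iii), $\Delta_f$ is indeed locally of finite presentation, being a section of the locally of finite type projection $X\times_Y X\to X$.) For (i) and (ii) your route diverges from the paper's: the paper simply quotes \cite[Prop.~4.15~(ii)]{laumon} for (i) and then obtains (ii) formally by combining (i), (iii) and Proposition~\pref{P:etale=smooth+unramified}, using that ``formally \etale{}'' equals ``formally smooth'' plus ``formally unramified'' (an equivalence of categories is a fully faithful, essentially surjective functor). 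That derivation of (ii) is essentially free and avoids any further descent argument.

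The descent argument you sketch for (i) has a genuine gap at its final step. Having lifted $T_0'\to U$ to $T'\to U$ and composed to get $T'\to X$ over $Y$, you cannot descend to $T\to X$ merely ``because $X$ is a stack in the \etale{} topology'': descent requires a descent datum, i.e.\ a $2$-isomorphism between the two pull-backs of $T'\to X$ to $T'\times_T T'$ satisfying the cocycle condition. Such a $2$-isomorphism is given over the closed subscheme $T_0'\times_{T_0}T_0'$, but extending it across the nilpotent thickening is itself a lifting problem for the Isom-spaces (torsors under the pulled-back inertia of $f$), and for a morphism that is only formally smooth there is no reason these liftings exist. The standard repair --- and the actual content of \cite[Prop.~4.15]{laumon}, alluded to in the remark preceding the corollary --- is to first reduce to $T$ strictly henselian local with square-zero ideal; then the smooth surjection $U\times_X T_0\to T_0$ already admits a section, no \etale{} refinement of $T_0$ is needed, and the lift $T\to U\to X$ requires no descent at all. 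For (ii) your descent does go through, but only because formal unramifiedness makes the restriction functors fully faithful, so the descent datum extends uniquely from $T_0'\times_{T_0}T_0'$; you need to say this explicitly, and the paper's derivation of (ii) from \etale{} $=$ smooth $+$ unramified sidesteps the issue entirely.
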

\begin{proof}
If $f$ is representable, then~\ref{CI:smooth}, \ref{CI:etale}
and~\ref{CI:unramified} are definitions and~\ref{CI:D-M} is trivial. For
general $f$, statement~\ref{CI:unramified} [resp.\ \ref{CI:D-M}] follows from
Proposition~\pref{P:form-unram/etale-and-diagonal} using
statement~\ref{CI:etale} [resp.\ \ref{CI:unramified}] for the representable
diagonal $\Delta_f$. Statement~\ref{CI:smooth} is~\cite[Prop.~4.15
(ii)]{laumon}. Finally~\ref{CI:etale} follows from
Proposition~\pref{P:etale=smooth+unramified} and~\ref{CI:smooth}
and~\ref{CI:unramified}.
%
\end{proof}

\end{section}


\begin{section}{Auxiliary limit results}\label{A:limits}
In this appendix we give some fairly standard limit results. For
simplicity we state these results for algebraic spaces although they
remain valid for algebraic stacks.




\begin{proposition}\label{P:semi-std-limits}
Let $S_0$ be an algebraic space and let
$S=\varprojlim_{\lambda} S_\lambda$ be an inverse limit of algebraic spaces that
are affine over $S_0$.
Let $X$ be a \emph{quasi-compact and quasi-separated} algebraic space
and let $X\to S$ be a morphism locally of finite presentation. Then there
exists an index $\lambda$, a quasi-compact and quasi-separated algebraic space
$X_\lambda$, a morphism $X_\lambda\to S_\lambda$ locally of finite presentation
and an $S$-isomorphism $X_\lambda\times_{S_\lambda} S\to X$. If $X\to S$ is
\etale{} then it can be arranged so that $X_\lambda\to S_\lambda$ also is
\etale{}.
\end{proposition}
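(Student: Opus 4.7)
The plan is to descend an \etale{} presentation of $X$ stage by stage. Since $X$ is quasi-compact and quasi-separated, I would choose an affine scheme $U$ together with an \etale{} surjection $U\to X$, and form the \etale{} equivalence relation $R := U\times_X U\rightrightarrows U$. Because $X$ is quasi-separated, $R$ is a quasi-compact and quasi-separated algebraic space, and both $U$ and $R$ are of finite presentation over $S$: the morphism $X\to S$ is of finite presentation since it is locally of finite presentation with $X$ quasi-compact and quasi-separated, and $U\to X$, $R\to X$ are \etale{}.

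Next I would invoke the limit theorem for quasi-compact and quasi-separated algebraic spaces of finite presentation over an affine inverse system (the algebraic-space analogue of~\cite[Thm.~8.8.2]{egaIV}). For $\lambda$ sufficiently large this produces quasi-compact and quasi-separated algebraic spaces $U_\lambda$ and $R_\lambda$ of finite presentation over $S_\lambda$ whose pullbacks to $S$ are $U$ and $R$ respectively. Applying the morphism version of the same theorem, the two projections $R\to U$ and the diagonal $U\to R$ descend to $S_\lambda$ for $\lambda$ large enough, and after possibly further increasing $\lambda$ the equivalence-relation identities (reflexivity, symmetry, transitivity) hold at level $\lambda$, since two morphisms of finitely presented $S$-objects that agree over $S$ already agree at some finite level.

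After a final increase of $\lambda$, the projections $R_\lambda\to U_\lambda$ are \etale{} and $R_\lambda\to U_\lambda\times_{S_\lambda} U_\lambda$ is a monomorphism, both properties following from the standard limit results for \etale{} morphisms and for monomorphisms locally of finite type. Then $(U_\lambda,R_\lambda)$ is an \etale{} equivalence relation, and I would define $X_\lambda$ to be its quotient, which is automatically a quasi-compact and quasi-separated algebraic space of finite presentation over $S_\lambda$. Quotients of \etale{} equivalence relations commute with base change, so $X_\lambda\times_{S_\lambda} S = [U/R] = X$. In the \etale{} case, $U\to S$ is \etale{}, hence $U_\lambda\to S_\lambda$ is \etale{} for $\lambda$ large enough by the same limit result, and then $X_\lambda\to S_\lambda$ is \etale{} since this property can be tested after pullback along the \etale{} surjection $U_\lambda\to X_\lambda$.

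The main technical point is the coherent simultaneous descent of the full equivalence-relation datum together with its required properties: one needs the limit theory for quasi-compact and quasi-separated algebraic spaces of finite presentation (descent of objects, of morphisms, of the \etale{} property, and of the monomorphism property) over an affine inverse system. Once these ingredients are assembled, the rest is bookkeeping: finitely many applications of ``raise $\lambda$'' until every required condition holds simultaneously at a single finite level.
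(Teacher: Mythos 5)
Your skeleton --- descend an affine \etale{} presentation $U\to X$ together with the equivalence relation $R=U\times_X U$, verify the equivalence-relation axioms and the \etale{}/monomorphism properties at a finite level, and take the quotient --- is exactly the one the paper uses. But the step you treat as routine is precisely where the content of the proposition lies, and as written it has a gap. The hypotheses do \emph{not} place you in the setting of the standard limit theorems: $S_0$ is an arbitrary algebraic space, so the $S_\lambda$ (which are only affine \emph{over} $S_0$) need be neither quasi-compact nor quasi-separated; the paper itself remarks after its proof that the proposition reduces to the standard limit result exactly when $S_0$ is quasi-compact and quasi-separated. Two of your assertions fail for this reason. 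First, ``$X\to S$ is of finite presentation since it is locally of finite presentation with $X$ quasi-compact and quasi-separated'' is unjustified: quasi-compactness of the \emph{morphism} $X\to S$ is not automatic from quasi-compactness of $X$ when $S$ is not quasi-separated (and, for the same reason, $U\times_S U$ need not be quasi-compact, which already obstructs descending the monomorphism $j\colon R\to U\times_S U$ naively). Second, and more seriously, the ``limit theorem over an affine inverse system'' that you invoke to descend $U$, $R$ and the structure maps is a theorem about inverse systems of affine (or at least quasi-compact and quasi-separated) schemes; applied to the system $(S_\lambda)$ it is either inapplicable or amounts to assuming the statement being proved.

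The missing idea is the paper's reduction to an honest affine base. Since $X$ is quasi-compact one first replaces $S_0$ by a quasi-compact open subspace, then chooses an \etale{} presentation $V_0\to S_0$ with $V_0$ an \emph{affine scheme}, sets $V_\lambda=V_0\times_{S_0}S_\lambda$ and $V=V_0\times_{S_0}S$ (these are affine schemes, since the $S_\lambda$ are affine over $S_0$), and takes the affine chart $U$ \etale{} over $V\times_S X$ rather than over $X$, so that $U\to X$ is still an \etale{} surjection but $U\to V$ is now a finitely presented morphism over the inverse system of affine schemes $(V_\lambda)$. Only then do the classical results of~\cite[\S 8, \S 17]{egaIV} legitimately produce $U_\lambda\to V_\lambda$ and the finitely presented monomorphism $j_\lambda\colon R_\lambda\to U_\lambda\times_{S_\lambda}U_\lambda$, after which your bookkeeping (equivalence-relation axioms via~\cite[Thm.~8.10.5~(i)]{egaIV}, \etale{}ness of the projections, passage to the quotient, compatibility with base change) goes through essentially as you describe. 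Without the intermediate $V_0$ your argument establishes only the already-known case of a quasi-compact and quasi-separated $S_0$.
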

\begin{proof}
Since $X$ is quasi-compact, we can assume that $S_0$ is quasi-compact
after replacing $S_0$ by an open subspace. Let $V_0\to S_0$ be an
\etale{} presentation with $V_0$ an affine scheme. Let
$V_\lambda=V_0\times_{S_0} S_\lambda$ and $V=V_0\times_{S_0} S$. Finally
choose an affine scheme $U$ and an \etale{} morphism $U\to V\times_S X$
such that $U\to X$ is surjective. Note that $U\to X$ and $U\to V$ are of finite
presentation. Let $R=U\times_X U$ and note that $\map{j}{R}{U\times_S U}$ is a
monomorphism of finite presentation as $X$ is quasi-separated.

Since $U\to V$ and $j$ are of finite presentation, there is 
for sufficiently large $\lambda$ a finitely presented scheme
$U_\lambda\to V_\lambda$, a finitely presented monomorphism
$\map{j_\lambda}{R_\lambda}{U_\lambda\times_{S_\lambda} U_\lambda}$
and cartesian diagrams
$$\vcenter{\xymatrix{U\ar[r]\ar[d] & U_\lambda\ar[d]\\
V\ar[r] & V_\lambda\ar@{}[ul]|\square}}\quad\text{and}\quad
\vcenter{\xymatrix{R\ar[r]\ar[d]^{j} & R_\lambda\ar[d]^{j_\lambda}\\
U\times_S U\ar[r] & U_\lambda\times_{S_\lambda} U_\lambda\ar@{}[ul]|\square}}$$
such that $\map{s_\lambda,t_\lambda}{R_\lambda}{U_\lambda}$ are \etale{}
with $s_\lambda=\pi_1\circ j_\lambda$ and $t_\lambda=\pi_2\circ j_\lambda$,
and $R_\lambda$ is quasi-compact.
The morphism $j_\lambda=(s_\lambda,t_\lambda)$ defines an equivalence relation
if and only if
\begin{enumerate}
\item[(R)] the pull-back of $j_\lambda$ along
  $\map{\Delta_{U_\lambda}}{U_\lambda}{U_\lambda\times_{S_\lambda}
  U_\lambda}$ is an isomorphism,
\item[(S)] the pull-back of $j_\lambda$ along
  $\map{(t_\lambda,s_\lambda)}{R_\lambda}{U_\lambda\times_{S_\lambda}
  U_\lambda}$ is an isomorphism, and
\item[(T)] the pull-back of $j_\lambda$ along
  $\map{(s\circ \pi_1,t\circ \pi_2)}
{R_\lambda\times_{t_\lambda,U_\lambda,s_\lambda} R_\lambda}
{U_\lambda\times_{S_\lambda} U_\lambda}$ is an isomorphism.
\end{enumerate}
The pull-back of the above maps along
$U\to U_\lambda$, $R\to R_\lambda$ and
$R\times_U R\to R_\lambda\times_{U_\lambda} R_\lambda$ respectively are
isomorphisms since $j$ is an equivalence relation. Noting that $j_\lambda$ is of
finite presentation and $U_\lambda$, $R_\lambda$ and
$R_\lambda\times_{U_\lambda} R_\lambda$ are quasi-compact, we conclude that
$j_\lambda$ is an equivalence relation for sufficiently
large $\lambda$ by~\cite[Thm.~8.10.5 (i)]{egaIV}.
The quotient $X_\lambda$ of this equivalence relation is
a quasi-compact and quasi-separated algebraic space which is locally of finite
presentation over $S_\lambda$.
The last assertion follows from~\cite[Prop.~17.7.8 (ii)]{egaIV}.
\end{proof}

Note that Proposition~\pref{P:semi-std-limits} reduces to the standard limit
result on finitely presented objects if $S_0$ is quasi-compact and
quasi-separated.

\begin{proposition}\label{P:limit-of-ft+qs}
Let $S_0$ be an affine scheme and let $S=\varprojlim_\lambda S_\lambda$ be an
inverse limit of affine $S_0$-schemes. Let $X_0$ be an algebraic space and let
$\map{f_0}{X_0}{S_0}$ be of \emph{finite type and quasi-separated}. Let
$\map{f_\lambda}{X_\lambda}{S_\lambda}$ and $\map{f}{X}{S}$ denote the base
changes of $f_0$. Then $f$ is a monomorphism (resp.\ closed immersion) if and
only if $f_\lambda$ is a monomorphism (resp.\ closed immersion) for
sufficiently large $\lambda$.
\end{proposition}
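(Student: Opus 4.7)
The plan is to dispose of the ``if'' direction immediately (base change preserves monomorphisms and closed immersions) and attack the ``only if'' direction via a diagonal argument for monomorphisms, then build the closed immersion case on top of it.

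For the monomorphism case, I would reduce the claim to showing that if $\Delta_f\colon X\to X\times_S X$ is an isomorphism, then $\Delta_{f_\lambda}\colon X_\lambda\to X_\lambda\times_{S_\lambda}X_\lambda$ is an isomorphism for sufficiently large $\lambda$. Since the formation of the diagonal commutes with base change, both $\Delta_f$ and $\Delta_{f_\lambda}$ are base changes of $\Delta_{f_0}$, so this fits the standard limit framework. The key observation is that although $f_0$ is only of finite type, its diagonal $\Delta_{f_0}$ is locally of \emph{finite presentation}: \'etale-locally one may assume $X_0$ is an affine scheme $\Spec(A)$ over $\Spec(R)=S_0$ with $A$ finitely generated over $R$, and then $\Delta_{f_0}$ is cut out by the finitely generated ideal $(a_i\otimes 1-1\otimes a_i)$ where the $a_i$ are generators. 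Moreover, since $f_0$ is of finite type and quasi-separated and $S_0$ is affine, both $X_0$ and $X_0\times_{S_0}X_0$ are quasi-compact and quasi-separated. The standard limit theorem for isomorphisms in a finitely presented family of morphisms of qcqs algebraic spaces (the analog of~\cite[Thm.~8.10.5~(i)]{egaIV}) then delivers the desired descent.

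For the closed immersion case, I would combine the monomorphism result with a descent statement for universal closedness, using the characterization of a closed immersion as a universally closed monomorphism. Since $f_0$ is of finite type and $S_0$ is affine, each $f_\lambda$ and $f$ is automatically quasi-compact. By the monomorphism case, we may choose $\lambda$ so that $f_\lambda$ is a monomorphism; it then suffices to descend the universal closedness of $f$ to some $f_\lambda$. This is a standard limit result for finite type morphisms of qcqs algebraic spaces, analogous to the corresponding part of~\cite[Thm.~8.10.5]{egaIV}.

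The main obstacle is that $f_0$ is only assumed of finite type, not of finite presentation, so naive limit arguments cannot be applied to $f_0$ itself. This is sidestepped by passing to the diagonal $\Delta_{f_0}$, which is always locally of finite presentation for a morphism locally of finite type, and by invoking a universal-closedness limit result that requires only finite type on the morphism and affineness (equivalently quasi-compactness plus quasi-separatedness) on the base.
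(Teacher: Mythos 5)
Your treatment of the monomorphism case is essentially identical to the paper's: both arguments reduce to the diagonal, use that the diagonal of a quasi-separated morphism locally of finite type is representable and of finite presentation, and then invoke the limit theorem for isomorphisms \cite[Thm.~8.10.5~(i)]{egaIV} applied to the system $\Delta_{f_\lambda}$ between quasi-compact and quasi-separated spaces. That half, and the trivial ``if'' direction, are fine.

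The closed immersion case is where you diverge, and where there is a genuine soft spot. You reduce to descending the universal closedness of $f$ to some $f_\lambda$ and call this ``a standard limit result \dots analogous to the corresponding part of \cite[Thm.~8.10.5]{egaIV}.'' But that theorem is stated for \emph{finitely presented} morphisms, and finite presentation is exactly the hypothesis you lack on $f_0$; the entire point of this proposition is the finite-type, non-finitely-presented case, which is why the diagonal trick was needed even for monomorphisms. The finite-type descent of properness is in fact true --- the remark following the proposition asserts that the result extends to ``proper'' and ``finite'' --- but it is not a formal consequence of the cited theorem and is of essentially the same depth as the claim you are proving, so invoking it as a black box begs the question. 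The paper supplies precisely the missing content: once $f_\lambda$ is a monomorphism it is quasi-finite and separated, hence strongly representable, so Zariski's main theorem \cite[Cor.~18.12.13]{egaIV} factors it as a quasi-compact open immersion $X_\lambda\to Y_\lambda$ followed by a finite morphism $Y_\lambda\to S_\lambda$; over $S$ this open immersion becomes open \emph{and} closed, and since a quasi-compact open subset is constructible, its closedness descends to some finite level by the limit theorem for constructible sets; then $X_\mu\to S_\mu$ is a proper monomorphism, hence a closed immersion. To repair your version, either carry out this argument or cite a reference that genuinely proves the limit statement for universally closed morphisms of finite type rather than of finite presentation.
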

\begin{proof}
The condition is clearly sufficient. To see that the condition is necessary
for the property ``monomorphism'', recall that a morphism $f$ is a
monomorphism if and only if its diagonal $\Delta_f$ is an isomorphism. As
the diagonal is strongly representable and finitely presented the necessity
in this case follows from~\cite[Thm.~8.10.5 (i)]{egaIV}. If $f$ is a closed
immersion then by the previous case $f_\lambda$ is a monomorphism for
sufficiently large $\lambda$. In particular $f_\lambda$ is quasi-finite and
separated so that $f_\lambda$ is strongly representable~\cite[Thm.~A.2]{laumon}
and Zariski's main theorem~\cite[Cor.~18.12.13]{egaIV} gives rise to
a factorization $X_\lambda\to Y_\lambda\to S_\lambda$ of $f_\lambda$ where the
first morphism is a quasi-compact open immersion and the second morphism is
finite. As $X\to Y_\lambda\times_{S_\lambda} S$ is an open and closed immersion
so is $X_\lambda\to Y_\lambda$ for sufficiently large $\lambda$.
In particular $X_\lambda\to S_\lambda$ is a proper monomorphism and hence a
closed immersion.
\end{proof}

More generally Proposition~\pref{P:limit-of-ft+qs} holds for properties such as:
proper, finite, affine, quasi-affine, separated; but not for other
properties such as being an isomorphism.
\end{section}


\begin{section}{Morphisms of constructible finite type}
\label{A:constructible}
In this section we define morphisms (locally) of \emph{constructible finite
type}. A morphism (locally) of finite presentation is (locally) of
constructible finite type and a morphism (locally) of constructible finite type
is (locally) of finite type. For morphisms of noetherian stacks, all these
notions coincide.

Let $X$ be a scheme. Recall that a subset $W\subseteq X$ is ind-constructible
(resp.\ pro-constructible) if locally $W$ is a union (resp.\ an intersection)
of constructible subsets~\cite[D\'ef.~7.2.2]{egaI_NE}. If $\map{p}{U}{X}$ is
locally of finite presentation and surjective, then $W$ is ind-constructible
(resp.\ pro-constructible, resp.\ constructible) if and only if $p^{-1}(W)$ is
so~\cite[Cor.~7.2.10]{egaI_NE}. Now let $X$ be an algebraic stack. We define a
subset $W\subseteq X$ to be ind-constructible (resp.\ pro-constructible, resp.\
constructible) if $p^{-1}(W)$ is so for some presentation $\map{p}{U}{X}$ with
$U$ a scheme. This definition does not depend on the choice of presentation.

\begin{definition}
Let $\map{f}{X}{Y}$ be a morphism of algebraic stacks. The morphism $f$ is
\emph{ind-constructible} if the image under $f$ of any ind-constructible subset
is ind-constructible. If this holds after arbitrary base change $Y'\to Y$, then
we say that $f$ is \emph{universally ind-constructible}.
\end{definition}

The primary example of an ind-constructible morphism is a morphism which is
locally of finite presentation~\cite[Prop.~7.2.3]{egaI_NE}.

\begin{definition}
A morphism $\map{f}{X}{Y}$ of stacks is \emph{locally of construct\-ible
finite type} if $f$ is locally of finite type and universally ind-constructible.
A morphism $f$ is of \emph{constructible finite type} if $f$ is quasi-compact,
quasi-separated and locally of constructible finite type.
\end{definition}

Morphisms (locally) of finite presentation are (locally) of constructible
finite type.
The image of a pro-constructible set under a quasi-compact morphism is
pro-constructible~\cite[Prop.~7.2.3]{egaI_NE}. It follows that a morphism 
of constructible finite type takes constructible subsets to constructible
subsets~\cite[Prop.~7.2.9]{egaI_NE}.


\begin{proposition}\label{P:loc-constructible-type}
Let $\map{f}{X}{Y}$ and $\map{g}{Y}{Z}$ be morphisms of algebraic stacks.
\begin{enumerate}
\item If $f$ and $g$ are locally of constructible finite type, then so is
$g\circ f$.
\label{PI:ic:1}
\item If $g\circ f$ is locally of constructible finite type and
if $g$ is locally of finite type, then $f$ is locally of constructible finite
type.
\label{PI:ic:2}
\end{enumerate}
\end{proposition}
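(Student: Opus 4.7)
The plan is to handle the two parts separately, reducing each to straightforward manipulations of ind-constructible morphisms. For part~\ref{PI:ic:1}, I would note first that the composition of two morphisms locally of finite type is itself locally of finite type, so all that remains is to verify universal ind-constructibility of $g\circ f$. Given any base change $Z'\to Z$, the pullback of $g\circ f$ factors as $X\times_Z Z'\to Y\times_Z Z'\to Z'$, and via the identity $X\times_Z Z'=X\times_Y(Y\times_Z Z')$ these two morphisms are the base changes of $f$ and $g$, respectively, hence ind-constructible by hypothesis. The composition of two ind-constructible morphisms is again ind-constructible, since $(g\circ f)(W)=g\bigl(f(W)\bigr)$ and ind-constructibility of images is preserved at each step.

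For part~\ref{PI:ic:2}, that $f$ is locally of finite type is standard: locally on the source, $\mathcal{O}_X$ is a finitely generated $\mathcal{O}_Z$-algebra, hence a fortiori a finitely generated $\mathcal{O}_Y$-algebra. The main content is to show that $f$ is universally ind-constructible. Given any $Y'\to Y$, I would set $X'=X\times_Y Y'$ and exploit the canonical factorization
$$X'\xrightarrow{j}X\times_Z Y'\xrightarrow{\pi'}Y',$$
coming from the identity $X\times_Y Y'=(X\times_Z Y')\times_{Y\times_Z Y}Y$. The morphism $\pi'$ is the pullback of $g\circ f$ along the composite $Y'\to Y\to Z$, hence ind-constructible by the universal ind-constructibility of $g\circ f$. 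The morphism $j$ is the pullback of the diagonal $\Delta_g\colon Y\to Y\times_Z Y$ along $X\times_Z Y'\to Y\times_Z Y$. Since $g$ is locally of finite type, $\Delta_g$ is locally of finite presentation: in the affine scheme case $Y=\Spec(B)$, $Z=\Spec(A)$ with $B=A[x_1,\dots,x_n]/I$, the kernel of the multiplication $B\otimes_A B\to B$ is finitely generated by the elements $x_i\otimes 1-1\otimes x_i$. Hence $\Delta_g$ is universally ind-constructible by \cite[Prop.~7.2.3]{egaI_NE}, and so is its pullback $j$. The composition $f'=\pi'\circ j$ is then ind-constructible, and since this holds for every $Y'\to Y$, the morphism $f$ is universally ind-constructible.

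The main obstacle is the non-representable case, where the diagonal $\Delta_g$ need not be a locally closed immersion of stacks. To handle this, I would pass to smooth presentations of $Y$ and $Z$ by schemes, reducing the claim that $\Delta_g$ is locally of finite presentation (and universally ind-constructible) to the representable situation treated above. The remainder of the argument proceeds without modification.
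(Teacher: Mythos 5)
Your proposal is correct and follows essentially the same route as the paper: part (i) is the routine check, and part (ii) rests on factoring $f$ (or its base changes) through $X\times_Z Y\to Y$, where the first map is a pullback of the diagonal $\Delta_g$ --- locally of finite presentation because $g$ is locally of finite type --- and the second is a pullback of $g\circ f$. The paper compresses this into one sentence by invoking part (i) for the factorization, whereas you unwind the base change by hand, but the key idea is identical.
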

\begin{proof}
\ref{PI:ic:1} is obvious.
\ref{PI:ic:2} As the diagonal of $g$ is locally of finite presentation, we have
that $f$ is the composition of a morphism locally of constructible finite type
and a morphism locally of finite presentation,
hence locally of constructible finite type.
\end{proof}

\begin{proposition}\label{P:closed-imm-constructible}
Let $\injmap{f}{Z}{X}$ be a closed immersion of algebraic stacks. The
following are equivalent:
\begin{enumerate}
\item $f$ is of constructible finite type.
\item The subset $|Z|\subseteq |X|$ is constructible.
\item The open immersion $X\setminus Z\to X$ is quasi-compact.
\end{enumerate}
\end{proposition}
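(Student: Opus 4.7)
First I would dispense with the formal conditions in (i): any closed immersion $f$ is quasi-compact (preimages of quasi-compacts are closed in quasi-compacts), quasi-separated (it is a monomorphism, so its diagonal is an isomorphism) and locally of finite type. Thus (i) is equivalent to the single condition that $f$ be universally ind-constructible. The equivalence between (ii) and (iii) is then a matter of unwinding definitions: an open subspace $U\subseteq X$ has quasi-compact inclusion into $X$ precisely when $U$ is retrocompact, which for $U=X\setminus Z$ (a closed $Z$) is exactly the condition that $|Z|\subseteq|X|$ be constructible (since the local closure of a closed set is automatic and constructibility of a closed set is equivalent to retrocompactness of its complement). Passing to a smooth presentation $p\colon U\to X$ and using the stability of retrocompactness and constructibility under pullback by $p$, one reduces both statements to the scheme case, where this is~\cite[7.2.3]{egaI_NE}.

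Next I would prove (i)$\Rightarrow$(ii). By the remark following the definition of constructible finite type, such a morphism sends constructible subsets to constructible subsets. Applied to the constructible subset $|Z|\subseteq|Z|$, this shows $f(|Z|)=|Z|$ is constructible in $|X|$, giving (ii).

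For (ii)$\Rightarrow$(i), the remaining nontrivial task is to show that $f$ is universally ind-constructible when $|Z|$ is constructible in $|X|$. Since the pullback of a constructible closed subset along an arbitrary morphism $Y\to X$ is again a constructible closed subset (constructibility is preserved under inverse images, and closed immersions are stable under base change), the property in question is preserved under base change, so it suffices to show that $f$ itself is ind-constructible. Given an ind-constructible $W\subseteq|Z|$, locally (on a presentation) $W$ is a union of constructible subsets of $|Z|$; each such constructible in $|Z|$ is, by the assumption that $|Z|$ is constructible in $|X|$, also constructible in $|X|$ (finite unions and intersections of constructibles are constructible). Hence $f(W)=W$ is ind-constructible in $|X|$, which completes the proof.

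The main (mild) obstacle is ensuring that the scheme-theoretic manipulations of constructible and retrocompact sets transport correctly to algebraic stacks; this is handled uniformly by the definition via a smooth presentation and the fact~\cite[Cor.~7.2.10]{egaI_NE} that constructibility and retrocompactness descend and ascend along surjective morphisms locally of finite presentation.
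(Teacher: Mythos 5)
Your proof is correct and follows the same route as the paper, which compresses the entire argument into a single citation of the fact that an open immersion is pro-constructible if and only if it is quasi-compact; you are simply unwinding the definitions (closed immersions are automatically quasi-compact, quasi-separated and locally of finite type, constructibility of a closed set equals retrocompactness of its open complement, and ind-constructibility transports along a constructible closed subspace) that the paper leaves implicit. The only point stated a little loosely is the claim that constructible subsets of $|Z|$ are constructible in $|X|$ --- this needs that retrocompact opens of $Z$ are traces of retrocompact opens of $X$, which holds locally on affine charts --- but this is the standard fact underlying the paper's one-line proof as well.
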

\begin{proof}
Immediate from the fact that an open immersion is pro-constructible if and
only if it is quasi-compact~\cite[Prop.~7.2.3]{egaI_NE}.
\end{proof}


Not every quasi-separated morphism of finite type is of constructible finite
type. For example, there are closed immersions which are not constructible.
A morphism locally of finite presentation, e.g., an \etale{} morphism, is
of constructible finite type if and only if it is of finite presentation.

Let $\map{f}{X}{Y}$ be an unramified morphism with a factorization $X\inj
X_1\to Y$ where $X\inj X_1$ is a nil-immersion and $X_1\to Y$ is unramified and
of finite presentation. Then $f$ is of constructible finite type. Conversely, if
$Y$ is quasi-compact and quasi-separated it is likely that every unramified
morphism $f$ of constructible finite type has such a factorization.
%
%
\end{section}


\bibliography{unramified}
\bibliographystyle{dary}

\end{document}